\title{Magic Polygons and Degenerated Magic Polygons: Characterization and Properties}
\newtheorem{prop}{Proposition}
\newtheorem{teorema}{Theorem}
\newtheorem{corolario}{Corollary}
\begin{document}

\begin{frontmatter}

\author[ed1]{Danniel Dias Augusto}
\ead{nieldiasguto@gmail.com}
\author[ed]{Josimar da Silva Rocha\corref{au2}}
\ead{jrocha@utfpr.edu.br}

\cortext[au2]{Corresponding author}

\address[ed1]{Coordena\c c\~ao da Matem\'atica, Universidade Estadual do Goi\'as, Unidade Universit\'aria de Formosa, 73807-250, Formosa - GO, Brazil} 

\address[ed]{Departamento de Matem\'atica, Universidade Tecnol\'ogica Federal do Paran\'a, \\ C\^ampus Corn\'elio Proc\'opio, \\
86300-000, Corn\'elio Proc\'opio - PR, Brazil}

\begin{abstract}
In this work we define Magic Polygons  $P(n,k)$ and Degenerated Magic Polygons $D(n,k) $ and we obtain their main properties, such as the  magic sum and the value  corresponding to the  root vertex. The  existence of magic polygons $P(n, k)$ and degenerated magic polygons $D(n, k)$ are discussed for certain values of  $n$ and $k.$ 

\end{abstract}

\begin{keyword}
Combinatorics \sep Magic Polygons \sep Degenerated Magic Polygons.
\end{keyword} 

\end{frontmatter}

\section{Introduction}

Magic Squares have been known for a long time in different people and cultures that, sometimes, has attributed mystic meanings \cite{1,2,14}.  In addiction to being  used for recreational purposes, we can now find applications for magic squares in Physics, in Computer Science, in Image Processing and in Criptography \cite{4,5,8}, among others.   In this way, have been developed several  methods to construct magic squares that satisfies some  particular properties and some generalization have been created, as we can see in  \cite{3,7,9,10,11,13}. 

We can see in \cite{6} a generalization of the same idea of the representation magic squares of order $3$ using vertices, midpoints and the geometric center of a square, where we can find some properties and the condiction of existence of magic polygons and a construction for magic square of order 4, for each  $n$ even. In \cite{12} others similar structures was proposed.

In this work we will cover a generalization of the work proposed in  \cite{6} and we show that some valid properties of  magic polygons for a given  order sometimes are not valid in general. In addition, we will introduce another class of polygonal structure known as the class of degenerate magic polygons.

\section{ Magic Polygons   $P(n,k)$}

Let $\Omega$ be a set of  $\frac{k}{2}$ regular polygons on plane with $n$ sides and corresponding parallel sides and centered in a central point  $C.$ 

A magic polygon  $P(n, k)$ of  $n$ sides and order $k+1$ is a set of 
  $\frac{k^{2}n}{2} + 1$ points satisfying the following conditions:
  \begin{itemize}
\item[(i)] Points of magic polygon are labeled by distict values from  $1$ to  $\frac{k^{2}n}{2} + 1;$  
\item[(ii)] One point of a magic polygon is the central point $C$;
\item[(iii)] $\frac{kn}{2}$ points of magic polygon are vertices of the  $\frac{k}{2}$ regular polygons of  $\Omega;$ 
\item[(iv)] The magic polygon has  $k-1$ intermediate points on each edge of regular polygons in  $\Omega, $ which gives a total of  
$\displaystyle \frac{(k-1)nk}{2}$ intermediate points.

\item[(v)] Segments with  diametrically opposite ends of the larger polygon of  $\Omega$ intersecting the central vertex contain $k+1$ points of the magic polygon;

\item[(vi)] Segments with ends at two adjacent vertices of a polygon of  $\Omega$ contains  $k+1$ points of the magic polygon;
\item[(vii)] The sum of values corresponding  to the $k+1$ points on each segment defined in  (iv) and  (v) is a fixed value  $u,$ called of  {\bf magic sum.}  
\end{itemize}

In Figures \ref{P(4,4)n} e \ref{P(8,2)} we can see examples of Magic Polygons $P(4, 4)$ and $P(8,2), $ respectively.

 \begin{figure}[!htb]
\centering
\includegraphics[scale=0.3]{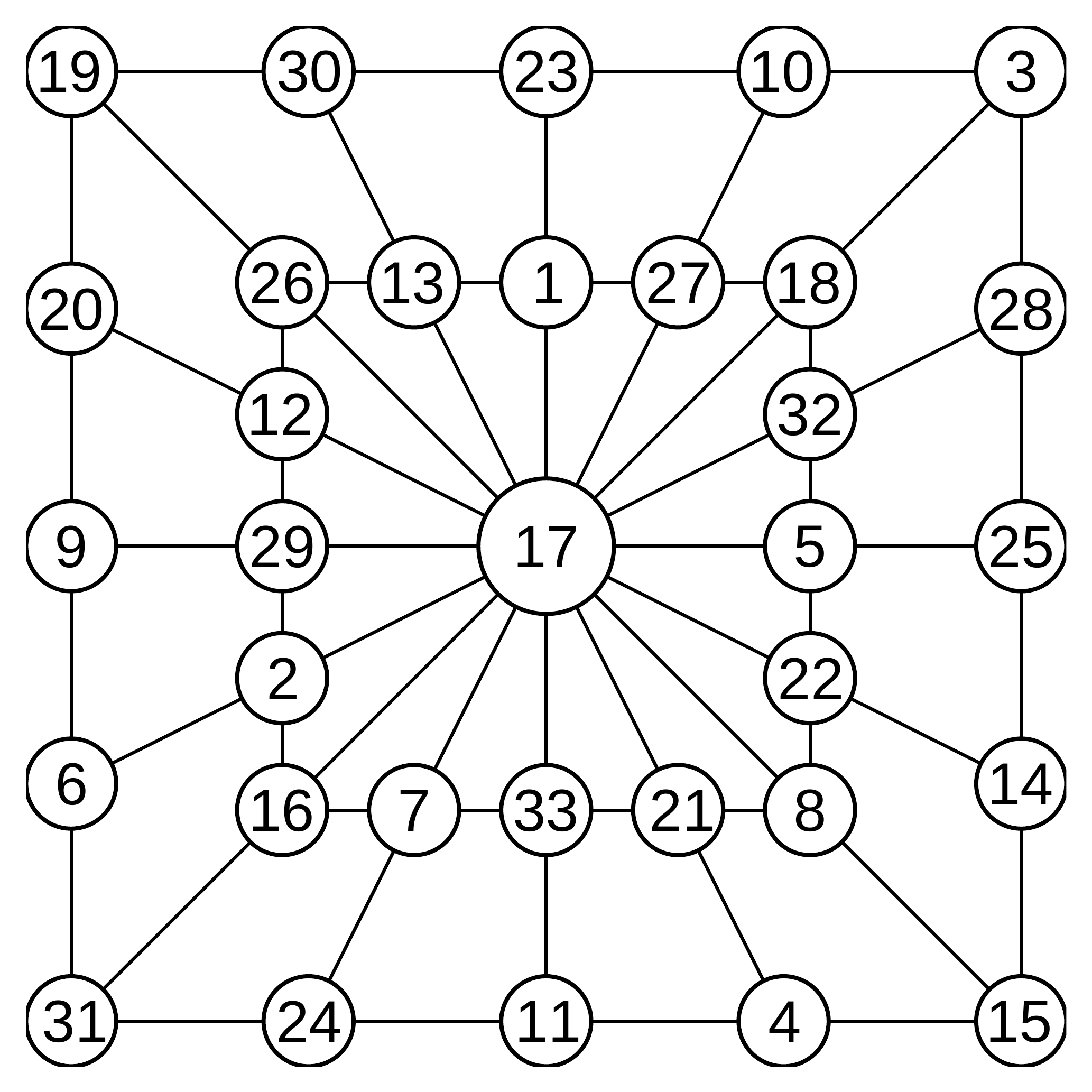}
\caption{Example of Magic Polygon  $P(4,4)$ }
\label{P(4,4)n}
\end{figure}

\begin{figure}[!htb]
\centering\includegraphics[scale=0.3]{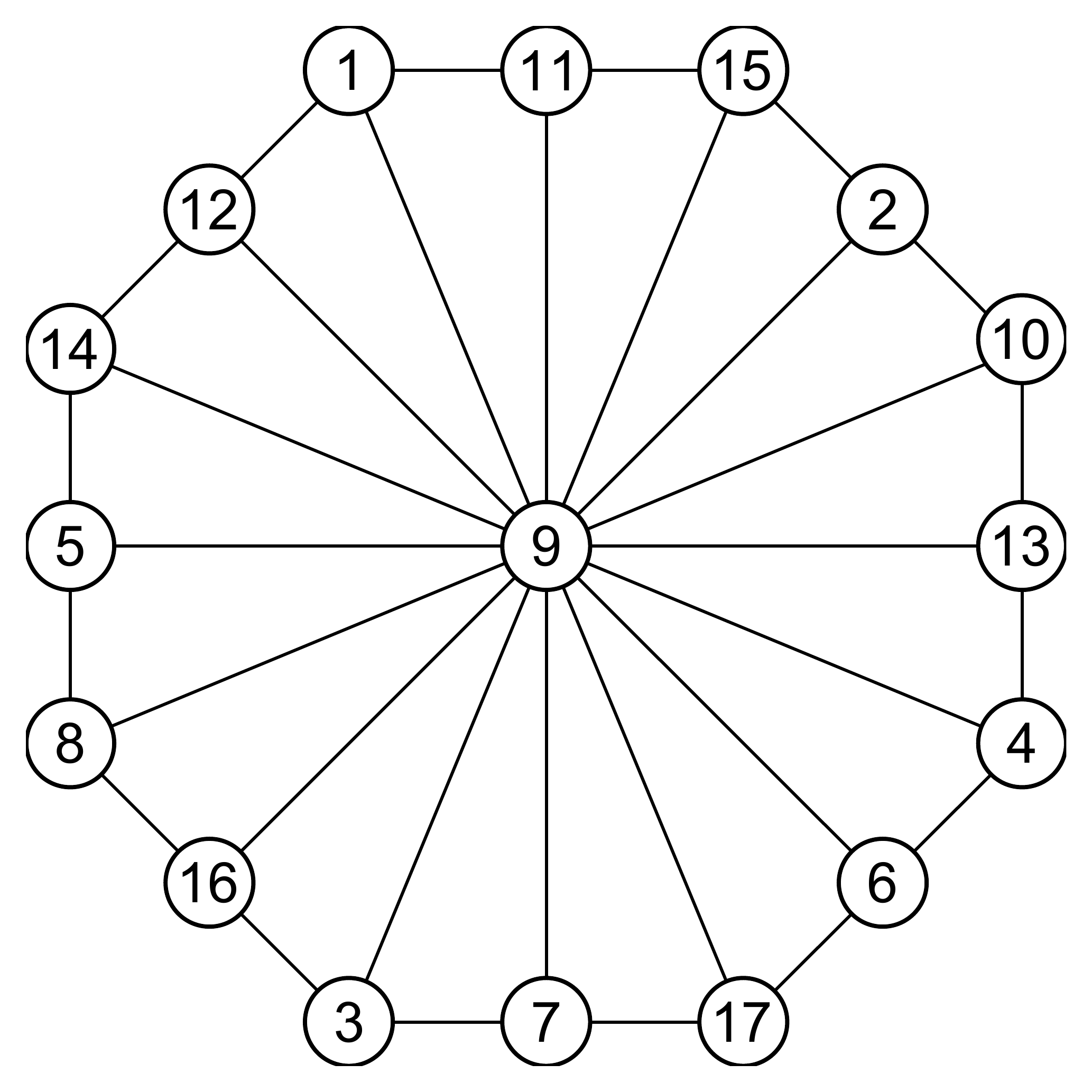}
\caption{Example of Magic Polygon  $P(8,2)$}
\label{P(8,2)}
\end{figure}

\begin{teorema} \label{teodeg}
  In a magic polygon  $P(n,k),$ we have the following properties:
 \begin{itemize}
 \item[(i)] the magic sum is  $(k+1)\frac{k^{2}n+4}{4};$
 \item[(ii)] the value corresponding to the root vertex is  $c = \frac{k^{2}n+4}{4};$
\item[(iii)] the sum $S_{j}$ of the values representing to   $j$-th points partitioning each edge on magic polygon  chosen clockwise is 
\[ S_{j} =  \frac{kn[k^{2}n+4]}{8} \]

\end{itemize}
\end{teorema}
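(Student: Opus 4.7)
The plan is to extract from conditions~(v)--(vii) a small linear system in the global quantities $V$ (sum of all vertex labels), $I$ (sum of all intermediate labels), the center value $c$, and $u$, and to solve it using the permutation constraint
\[
V+I+c=T=\tfrac{N(N+1)}{2}=\tfrac{(k^{2}n+2)(k^{2}n+4)}{8}, \qquad N=\tfrac{k^{2}n}{2}+1.
\]
The first task is to enumerate the magic lines: by~(vi) there are $\tfrac{nk}{2}$ edges, each summing to $u$, and since each vertex lies on two edges of its polygon and each intermediate on exactly one, this gives $\tfrac{nk}{2}u=2V+I$. For the diameters in~(v), the key geometric remark is that because the polygons of $\Omega$ are concentric and similar with pairwise parallel sides, the $j$-th division point on any edge of the outer polygon and the $j$-th division point on the corresponding edge of any inner polygon differ only by the scaling factor between the two polygons, and are therefore collinear with the center. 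Consequently, for each pair of opposite edges of the outer polygon and each $j\in\{0,\ldots,k-1\}$, the segment joining their $j$-th points through the center contains exactly $k+1$ magic points: two outer, $k-2$ inner (two per inner polygon), and the center. I will argue that this is how ``diametrically opposite ends of the larger polygon'' in~(v) should be read, giving $\tfrac{n}{2}$ vertex-diameters and $\tfrac{n(k-1)}{2}$ intermediate-diameters, $\tfrac{nk}{2}$ diameters in total.

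Summing the $n/2$ vertex-diameters yields $\tfrac{n}{2}u=V+\tfrac{n}{2}c$, hence $V=\tfrac{n(u-c)}{2}$. For each $j\in\{1,\ldots,k-1\}$, summing the $n/2$ intermediate-diameters at position $j$ gives $\tfrac{n}{2}u=S_{j}+\tfrac{n}{2}c$, whence $S_{j}=\tfrac{n(u-c)}{2}=V$; combined with the trivial identities $S_{0}=S_{k}=V$ this already settles~(iii) once $V$ is determined. Substituting $I=\sum_{j=1}^{k-1}S_{j}=(k-1)V$ into the edge equation yields $(k+1)V=\tfrac{nk}{2}u$, and comparing with $V=\tfrac{n(u-c)}{2}$ produces $u=(k+1)c$. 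Finally, $V+I+c=kV+c=T$ together with $V=\tfrac{nkc}{2}$ gives $cN=\tfrac{N(N+1)}{2}$, so $c=\tfrac{N+1}{2}=\tfrac{k^{2}n+4}{4}$, from which (i) and (iii) follow by back-substitution.

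The main obstacle is the enumeration of diameters: under a restrictive reading of~(v) in which only opposite vertices qualify as endpoints, the aggregate linear system has only three independent equations in the four unknowns $V,I,c,u$, and small explicit examples (already visible for $P(4,2)$) show that the values stated in (i)--(iii) are not then forced. The extended reading, underwritten by the elementary scaling lemma for concentric similar polygons described above, is exactly what makes the system determinate and forces $u=(k+1)c$ as a rigid consequence of the magic structure.
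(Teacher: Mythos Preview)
Your proof is correct and follows essentially the same route as the paper: obtain $S_{j}=\frac{n(u-c)}{2}$ from the diameter constraints, combine this with the edge-sum identity $2V+I=\frac{nk}{2}\,u$ to force $u=(k+1)c$, and then use the global relation $kV+c=\frac{N(N+1)}{2}$ to determine $c$. Your explicit justification of why condition~(v) must be read as including diameters through the intermediate division points (and your remark that the restrictive reading leaves the linear system underdetermined) is an addition the paper does not make---it simply applies the extended reading in its equation for the diametral sums without comment.
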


\begin{proof}

Let  $x_{(t-1)nk+(i-1)k+j}$ be the value correspondig to the point  $P_{(t-1)nk+(i-1)k+j},$  $j$-th point of the  $i$-th edge of the  $i$-th regular polygon.

Each point  $P_{(t-1)nk+(i-1)k+j}$ of the magic polygon is labeled for a number 
\[ x_{(t-1)nk+(i-1)k+j}  \]
where  $t \in \{1, 2, \cdots, \frac{k}{2}\}, i \in \{ 1, 2, \cdots, n\} $ and $j\in \{1, \cdots, k\}$  and the root vertice is labeled by the number $c,$ then we have 
\begin{equation}
\frac{k^{2}n}{2} + 1
\end{equation} 
point in the magic polygon.

The sum of values correspondig points on the  $i$-th edge of  $t$-th regular polygon is given by 
\begin{equation}
x_{(t-1)nk+(i-1)k+1} + x_{(t-1)nk+(i-1)k+2} + \cdots + x_{(t-1)nk+ik} + x_{(t-1)nk+ik+1} = u,
\end{equation}
where  $u $ is the magic sum.

The sum of values correspondig points on the segments determined by the points  $P_{j+ik} $ and $P_{j+ (i-1)k + \frac{kn}{2} } $ is 

\begin{equation}\label{IIIeqg2}
\sum_{t=1}^{\frac{k}{2}} x_{(t-1)nk + (i-1)k+j} + \sum_{t=1}^{\frac{k}{2}} x_{(t-1)nk+(i-1)k+j+\frac{kn}{2}}+ c = u,
\end{equation}
 where $c$ is the value assigned to the root vertice and  $j + (i-1)k \leq \frac{kn}{2} - 1.$

Let 
\begin{equation}\label{IIIeqg3}
 S_{j} = \sum_{i=1}^{n}\sum_{t=1}^{\frac{k}{2}} x_{(t-1)nk+(i-1)k+j} 
 \end{equation}
for  $j \in \{1, 2, \cdots, k\}. $

By \eqref{IIIeqg2} and \eqref{IIIeqg3},
\begin{equation} \label{IIIeqg3-1}
S_{j} +\frac{nc}{2} = \frac{nu}{2},
\end{equation}
which implies  that 
\begin{equation}\label{IIIeqg3-2}
S_{j} = \frac{n(u-c)}{2}
\end{equation}

Adding equations involving points on perimeters of the polygons, we obtain
\begin{equation}\label{IIIeqg4}
2S_{1} + S_{2} + \cdots + S_{k}  = \frac{knu}{2}.
\end{equation}

Adding equations involving the root vertex of the magic polygon, we obtain
\begin{equation} \label{IIIeqg6}
S_{1} + S_{2} + \cdots + S_{k} + \frac{knc}{2} = \frac{knu}{2} 
\end{equation}
which implies that 

\begin{equation}\label{IIIeqg6-1}
S_{1}+S_{2} + \cdots + S_{k} = \frac{kn(u-c)}{2}
\end{equation}

Subtracting  \eqref{IIIeqg6-1} from  \eqref{IIIeqg4}, we have 

\begin{equation}\label{IIIeqg7}
S_{1} = \frac{knc}{2}
\end{equation}

By \eqref{IIIeqg3-2} and \eqref{IIIeqg7},  
\begin{equation} \label{IIIeqg8}
\frac{n(u-c)}{2} = \frac{knc}{2}.
\end{equation}

Therefore
\begin{equation} \label{IIIeqg9}
u = (k+1)c
\end{equation}

As  values corresponding to the points of the magic polygon are distinct values in  $\{1, 2, \cdots, \frac{k^{2}n}{2}+1\},$
it follows that 

\begin{equation}\label{IIIeqg10}
S_{1} + S_{2} + \cdots + S_{k} + c = \sum_{i=1}^{\frac{k^{2}n}{2}+1} i = \frac{\left(\frac{k^{2}n}{2}+1\right)\left(\frac{k^{2}n}{2} + 2\right)}{2}
\end{equation}

By \eqref{IIIeqg6-1} and \eqref{IIIeqg10}, it follows that 

\begin{equation}\label{IIIeqg11}
\frac{kn(u-c)}{2} + c = \frac{\left(\frac{k^{2}n}{2}+1\right)\left(\frac{k^{2}n}{2} + 2\right)}{2}
\end{equation}

By \eqref{IIIeqg9} and \eqref{IIIeqg11}, it follows that 
\begin{equation}\label{IIIeqg12}
\left(\frac{k^{2}n}{2}+1\right)c = \frac{\left(\frac{k^{2}n}{2}+1\right)\left(\frac{k^{2}n}{2} + 2\right)}{2}.
\end{equation}

Therefore
\begin{equation} \label{IIIeqg13}
c = \frac{k^{2}n+4}{4}
\end{equation}

Moreover,
\begin{equation}
S_{j} \stackrel{\eqref{IIIeqg3-2}}{=} \frac{n(u-c)}{2} \stackrel{\eqref{IIIeqg9}}{=}
\frac{nkc}{2} \stackrel{\eqref{IIIeqg13}}{=} \frac{nk\left[k^{2}n+4\right]}{8} 
\end{equation}
\end{proof}

\subsection{Constructing examples for $P(n,4)$}
The following result affords us a construction for  $P(n,4),$ provided that some conditions are satisfied:

\begin{teorema}\label{constn4}
Let two regular polygons with $n$ sides of distinct sizes centered on a central point $C$ whose sides are partitioned into $4$ segments by points such that $2n$ segments passing through the center point and cutting the larger polygon intercept the sides of the polygons at these points, satisfying the following conditions:

\begin{itemize}
\item[(i)] 
Each point $P_{4(t-1)n+4(i-1)+j}$ on partitions is labeled by   
$\displaystyle x_{4(t-1)n+4(i-1)+j}  \in \{1, 2, \cdots, 8n+1\};$
  
\item[(ii)] a central point is labeled by   $c = 4n+1;$

\item[(iii)] $x_{4(i-1)+j} + x_{4n + 4(i-1) + j} = 2(4n+1);$
\item[(iv)] $x_{4(t-1)n+4(i_{1}-1)+j_{1}} + x_{4(t-1)n+4(i_{2}-1)+j_{2}} \neq 2(4n+1),$ if  $(i_{1},j_{1}) \neq (i_{2}, j_{2}); $ 
\item[(v)] $x_{4(i-1)+1} + x_{4(i-1)+2} + x_{4(i-1)+3} + x_{4(i-1)+4} + x_{4(i-1)+5} = 5(4n+1);$

\noindent where $t \in \{1, 2\}, i_{1}, i_{2}, i \in \{ 1, 2, \cdots, n\} $ e $j_{1}, j_{2}, j\in \{1, 2, 3, 4\}.$
\end{itemize}
In this conditions, we obtain  $8n+1$ points that define a magic polygon  $P(n,4).$ 
\end{teorema}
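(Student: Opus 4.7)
My plan is to verify directly the seven defining clauses of a magic polygon $P(n,4)$ using hypotheses (i)--(v). The counting is built in: each of the two concentric $n$-gons provides $4n$ labeled positions and the central vertex accounts for one more, giving $\frac{k^{2}n}{2}+1=8n+1$ labels for $k=4$. Likewise, (ii) fixes the central value at $c=4n+1=\frac{k^{2}n+4}{4}$ evaluated at $k=4$, which is exactly the value predicted by Theorem~\ref{teodeg}(ii). The target magic sum is then $u=(k+1)c=5(4n+1)$, so the verification reduces to checking this sum on every edge and every diameter, and to establishing distinctness of the $8n+1$ labels.

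Hypothesis (v) directly asserts the edge sum $5(4n+1)$ on every edge of the outer polygon. For an edge of the inner polygon I would rewrite each label via (iii) as $x_{4n+4(i-1)+j}=2(4n+1)-x_{4(i-1)+j}$, with the understanding that position $j=5$ on edge $i$ refers to position $j=1$ on edge $i+1$ (cyclically within each polygon), and then sum: $5\cdot 2(4n+1)-5(4n+1)=5(4n+1)$, giving the same value. For a diameter line through the central vertex, the five points consist of two diametrically opposite points of the outer polygon, the two corresponding points of the inner polygon (which share the line with them because the polygons are concentric and have parallel sides), and the central vertex; applying (iii) to each of the two outer-inner pairs contributes $2\cdot 2(4n+1)$, and adding $c=4n+1$ produces the diameter sum $5(4n+1)$, again matching the magic sum predicted by Theorem~\ref{teodeg}(i).

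The step I expect to be most delicate is the distinctness of the $8n+1$ labels. Hypothesis (iii) organises the $8n$ polygon labels into $4n$ pairs summing to $2(4n+1)$, and (iv) forbids any two distinct positions of a single polygon from themselves forming such a pair. From this I would argue, first, that no polygon label can equal the central value $4n+1$, for otherwise its (iii)-partner would also equal $4n+1$ and the value $4n+1$ would appear both at the center and inside a polygon; second, that no outer-polygon label can coincide with any inner-polygon label, since $x_{p}=x_{4n+q}=2(4n+1)-x_{q}$ would yield $x_{p}+x_{q}=2(4n+1)$ with $p\neq q$ in the outer polygon, contradicting (iv). Combined with the range constraint in (i), these observations force the $8n+1$ labels to exhaust $\{1,\ldots,8n+1\}$ bijectively, and the configuration is a magic polygon $P(n,4)$ as claimed.
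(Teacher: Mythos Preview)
Your verification of the magic sums is essentially the paper's proof: hypothesis (v) for outer edges, (iii) combined with (v) for inner edges via $5\cdot 2(4n+1)-5(4n+1)=5(4n+1)$, and (iii) twice plus (ii) for diameters via $2\cdot 2(4n+1)+(4n+1)=5(4n+1)$. These are exactly the computations the paper carries out, only phrased more compactly.

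Where you go beyond the paper is in attempting to derive distinctness of the labels; the paper's proof omits this entirely and simply writes ``hence, by definition, we get $P(n,4)$'' after checking the sums. Your distinctness argument, however, is not complete. The step ruling out $x_{p}=4n+1$ is circular: observing that $4n+1$ would then appear at several positions is precisely the failure of distinctness you are trying to exclude, not a contradiction with (i)--(v). You also never address repeats \emph{within} one polygon, and (iv) does not forbid these --- it only forbids complementary pairs, so $x_{p}=x_{q}$ with $p\neq q$ and $2x_{p}\neq 2(4n+1)$ is consistent with (i)--(v). In fact the hypotheses, read literally, do not force distinctness at all; the paper is tacitly reading (i) as asserting a bijection onto $\{1,\dots,8n+1\}$, and under that reading your extra paragraph is unnecessary.
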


\begin{proof} Let $i \in \{1, 2, \cdots, n-1\}. $ 

By  (iii), we have  $x_{4(i-1)+j} + x_{4n+4(i-1) + j} = 2(4n+1)$ for $j \in \{1, 2, 3, 4\}.$ 

Therefore
\begin{equation} \label{x1}
x_{4(i-1)+1} + x_{4n+4(i-1)+1} + \cdots + x_{4(i-1) + 4} + x_{4n + 4(i-1) + 4}  + x_{4(i-1) + 5} + x_{4n + 4(i-1) + 5} = 5\cdot 2 (4n+1) 
\end{equation}

By   \eqref{x1} and (v), we get  
\begin{equation} \label{x2}
5(4n+1) + x_{4n+4(i-1)+1} + x_{4n+4(i-1)+2} + x_{4n + 4(i-1) + 3} + x_{4n+4(i-1) + 4} + x_{4n + 4(i-1) + 5} = 10(4n+1)
\end{equation} 

Thus 
\begin{equation} \label{x3}
x_{4n + 4(i-1)+ 1} + x_{4n+4(i-1) + 2} + x_{4n + 4(i-1) + 3} + x_{4n + 4(i-1) + 4} + x_{4n + 4(i-1) + 5} = 5(4n + 1)
\end{equation} 

Consequently, the sum of values corresponding to the points on edges of the polygons is   $5(4n + 1).$ 

Therefore, by (iii) and  (ii), we get  

\begin{equation} \label{x4}
x_{4(i-1) + j} + x_{4n + 4(i-1) + j} + c + x_{4(i-1) + 2n + j} + x_{4n + 4(i-1) + 2n + j} = 2(4n+1) + 4n+1 + 2(4n+1) = 5(4n+1)   
\end{equation} 

Consequently, the sum of values corresponding to the points on the segments lying by central point is also $5(4n+1).$

Hence, by definition, we get  $P(n, 4).$  
\end{proof}

In Figure \ref{P(4,4)} we have an example of Magic Polygon $ P(4, 4) $ constructed by Theorem \ref{constn4}. An example of Magic Polygon $ P (4, 4) $ that can not be obtained by this construction can be seen in the figure \ref{P(4,4)n}.

\begin{figure}[!htb]
\centering
\includegraphics[scale=0.3]{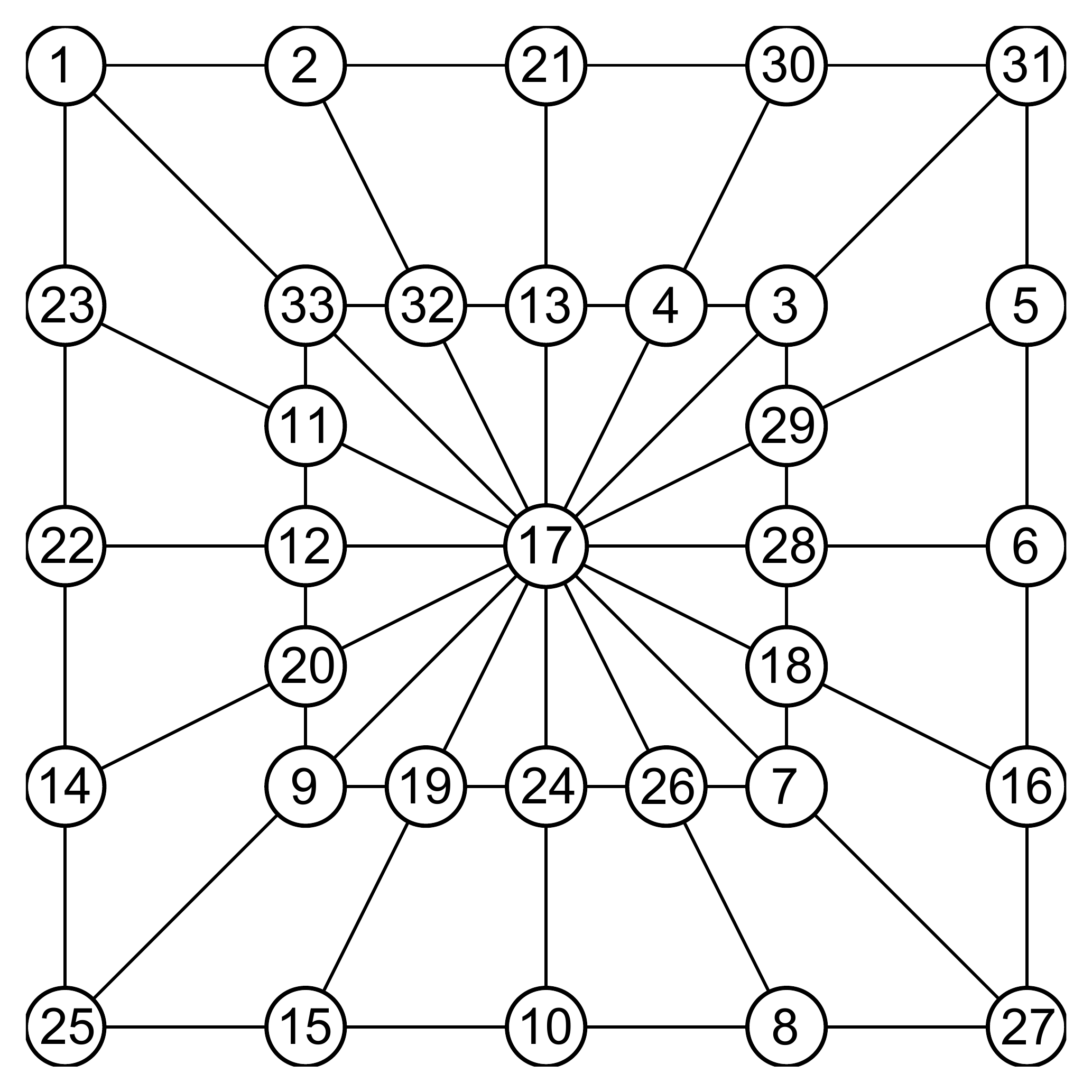}
\caption{Example of Magic Polygon   $P(4,4)$ using construction }
\label{P(4,4)}
\end{figure}

\subsection{The  particular case $P(n,2)$}

Although it is a particular case of the general case seen in the previous section, we will use another reasoning for a demonstration of the properties of the magic polygons $P(n,2).$

A {\bf magic polygon} $P(n,2)$ is formed by $2n+1$ points, consisting of the vertices, the midpoints of the edges, and the geometric center of a regular polygon of $n$ sides, which are labeled by numerical values from $1$ to $2n+1$, so that the sum of the values assigned to any three collinear pointes is constant, called {\bf magic sum.}

\begin{teorema}\label{teopm1}
 If $n$ is even, then a magic polygon  $P(n,2)$ has the following properties:
 \begin{itemize}
 \item[(i)] the magic sum is  \'e $3(n+1);$
 \item[(ii)] the value corresponding to the central point is   \'e $n+1;$
\item[(iii)] the sum  $S_{1}$ of the  values assigned  to  vertices of the magic polygon and the sum  $S_{2}$ of the values assigned to  midpoints of  edges of the magic polygon  satisfies $S_{1} = S_{2} = n(n+1).$
\end{itemize}
\end{teorema}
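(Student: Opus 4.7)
The plan is to derive a small linear system in the four unknowns $u$ (magic sum), $c$ (central value), $S_1$ (sum of vertex labels), and $S_2$ (sum of midpoint labels), and to solve it. First I would record the book-keeping identity that comes from the labels forming a permutation of $\{1,\dots,2n+1\}$:
\[ S_1 + S_2 + c = \frac{(2n+1)(2n+2)}{2} = (2n+1)(n+1). \]

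Next I would enumerate the natural families of collinear triples of labeled points and sum the magic-sum relation over each. Summing over the $n$ edges, where each vertex is counted twice and each midpoint once, yields $2S_1 + S_2 = nu$. Since $n$ is even, the $n$ vertices split into $n/2$ diametrically opposite pairs, each forming together with $C$ a collinear triple of sum $u$; adding those gives $S_1 + \tfrac{n}{2}c = \tfrac{n}{2}u$. The same parity makes midpoints of opposite edges collinear through $C$, so a symmetric argument produces $S_2 + \tfrac{n}{2}c = \tfrac{n}{2}u$.

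Subtracting the last two equations gives $S_1 = S_2 = \tfrac{n}{2}(u-c)$; substituting into $2S_1 + S_2 = nu$ collapses to $u = 3c$; and feeding this back into the book-keeping identity yields $(2n+1)c = (2n+1)(n+1)$, whence $c = n+1$. The remaining claims $u = 3(n+1)$ and $S_1 = S_2 = n(n+1)$ then follow by direct substitution. The only place the hypothesis ``$n$ even'' is actually used is in the construction of the midpoint-diameters, and I do not foresee any real obstacle: once that third family of relations is in hand, the system is linear and well-determined, and without it one could not decouple $S_1$ from $S_2$ from line sums alone.
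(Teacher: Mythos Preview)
Your argument is correct and in fact cleaner than the paper's. The key difference is that you split the $n$ diameter relations into two sub-families (the $n/2$ vertex--vertex diameters and the $n/2$ midpoint--midpoint diameters) and sum them separately, obtaining $S_1+\tfrac{n}{2}c=\tfrac{n}{2}u$ and $S_2+\tfrac{n}{2}c=\tfrac{n}{2}u$; this immediately yields $S_1=S_2$ and makes the remaining linear algebra trivial. The paper instead sums all $n$ diameter relations together to get only $S_1+S_2+nc=nu$, and from this and $2S_1+S_2=nu$ can only conclude $S_1=nc$ and $S_2=nu-2nc$, which together with the book-keeping identity gives a single Diophantine relation $nu-(n-1)c=(2n+1)(n+1)$ with a one-parameter family of integer solutions; it then has to invoke the inequality $\tfrac{n(n+1)}{2}\le S_2\le\tfrac{3n(n+1)}{2}$ to single out the correct value of the parameter. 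Your extra equation removes the need for this bounding step entirely. One small remark: the hypothesis ``$n$ even'' is used for \emph{both} of your diameter families, not just the midpoint one --- for odd $n$ a vertex is diametrically opposite a midpoint, so neither separated sum makes sense.
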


\begin{proof}

As a polygon with  $n$ sides has  $n$ vertex and  $n$ midpoints, including a central point, we obtain $2n+1$ points labeled using  distinct integers numbers  from  $1$ to $2n+1$.

Let  $x_{n}$ be the value assign to vertex $V_{n}$ and  let be   $y_{n}$ the values assign to the midpoint  $M_{n}$ whose endpoints are  $V_{n} $ and $V_{1}. $  For each  $i \in \{1, \cdots, n-1\}, $ let  $x_{i}$ be the value assigned to the vertex $V_{i}$ and let  $y_{i} $ be the value assigned to midpoint $M_{i}$ whose endpoints are  $V_{i}$ e $V_{i+1}.$

For each  $i \in \{1, \cdots, \frac{n}{2}\}, $ let $x_{i}^{\ast} $ be the value assigned to the point diametrically opposite to vertice  $V_{i}$ and  $y_{i}^{\ast} $ be the value assigned to the point  diametrically opposite to the midpoint  $M_{i}. $ 

Denoting by  $c$ the value assigned by the  central point  $C$ of the magic polygon and denoting by  $u$ the value of the magic sum, we obtain two sets of equations that define a magic polygon with $n$ sides, for an even  number $n$: the equations involving points on the same side  and equations involving points on simmetry axes of the magic polygon.

Analyzing equation involving points on the same side of a magic polygon, we obtain
    \begin{equation} \label{sis1}
    \left\{ \begin{array}{c}
        x_1+y_1+x_2=u \\
        x_2+y_2+x_3=u\\
    x_3+y_3+x_4=u \\
              \vdots \\
        x_n+y_n+x_1=u \\
    \end{array} \right. \end{equation}
    
Analyzing equations involving points on the same segment in the definition of a magic polygon, we obtain
\begin{equation}\label{sis2}
\left\{ \begin{array}{c}
    x_1+c+x_{1^*}=u \\
    y_1+c+y_{1^*}=u\\
    \vdots \\
x_{\frac{n}{2}}+c+x_{\frac{n}{2}^*} = u\\
    y_{\frac{n}{2}}+c+y_{\frac{n}{2}^*} = u\\
    \end{array} \right. \end{equation}
    
Let   $$S_1=\sum_{i=1}^n x_i  \mbox{ and }  S_2=\sum_{i=1}^n y_i$$   
     
Adding the equations of  \eqref{sis1}, we get
\begin{equation} \label{eq1}
2S_{1} + S_{2} = nu
\end{equation}

Adding the equations of \eqref{sis2}, we get

 \begin{equation} \label{eq2}
     S_1+S_2+nc=nu
 \end{equation}

Subtracting \eqref{eq2} from \eqref{eq1}, we get
\begin{equation} \label{eq3} 
    S_1=nc
\end{equation}

By  \eqref{eq3} and \eqref{eq2}, we get
\begin{equation} \label{eq4}
    S_2=nu-2nc
\end{equation}

As the values assigned to the   $n$ midpoints of the magic polygon with  $n$ sides are distinct values of the set $\{1, \cdots, 2n+1\} $ and the  sum of this values is  $S_{2}$, then  

\begin{equation}\label{eq5}
\sum_{i=1}^{n} i \le S_2 \le \sum_{i=n+2}^{2n+1} i
\end{equation}

In addition,
\begin{equation} \label{eq6}  
\sum_{i=1}^{n} i = \frac{n(n+1)}{2} 
\end{equation}
and 
\begin{equation}\label{eq7}
    \sum_{i=n+2}^{2n+1} i = \frac{3n(n+1)}{2}
\end{equation}

Therefore, by \eqref{eq5}, \eqref{eq6} and \eqref{eq7}, we get
\begin{equation}\label{eq8}
    \frac{n(n+1)}{2}\le S_2 \le \frac{3n(n+1)}{2}
\end{equation}

By \eqref{eq3} and \eqref{eq4}, we obtain

\begin{equation}\label{eq9}
    S_1+S_2+c=nu-c(n-1)
\end{equation}

Moreover, 
\begin{equation}\label{eq10}
S_1+S_2+c = \sum_{i=1}^{2n+1} i = \frac{(2n+1)(2n+2)}{2} = (2n+1)(n+1).
\end{equation}

Therefore, by \eqref{eq9} and \eqref{eq10}, we obtain the follow diophantine equation
\begin{equation}\label{eq11}
nu -c(n-1) = (2n+1)(n+1)
\end{equation}
whose general solution is 
\begin{equation}\label{eq12}
(u,c) = \left( (2n+1)(n+1) + (n-1)t, (2n+1)(n+1) + nt\right), \forall t \in \mathbb{Z}.
\end{equation}

By \eqref{eq12} and \eqref{eq4}, we obtain 
\begin{equation}\label{eq13}
\begin{array}{ll}
S_{2} & = nu - 2nc \\
& = n\left[ (2n+1)(n+1) + (n-1)t\right] - 2n\left[(2n+1)(n+1) + nt\right] \\
& = -n(2n+1)(n+1) - n(n+1)t \\ 
& = -(n+1)n(2n+1+t)
\end{array}
\end{equation}

By \eqref{eq13} and \eqref{eq8}, we get 
\begin{equation}\label{eq14}
\frac{n(n+1)}{2} \le -(n+1)n(2n+1+t) \le \frac{3n(n+1)}{2}.
\end{equation}

Simplyfing  \eqref{eq14}, we obtain

\begin{equation}\label{eq15}
\frac{1}{2} \le -(2n+1+t) \le \frac{3}{2} 
\end{equation}

It follows from   $-(2n+1+t) \in \mathbb{Z}$ and   \eqref{eq15} that  
$-(2n+1+t) = 1;$ hence 
\begin{equation} \label{eq16}
t = -2(n+1)
\end{equation}

By   \eqref{eq16} and \eqref{eq12}, 
\begin{equation}\label{eq17}
\begin{array}{ll}
(u,c) & = \left((2n+1)(n+1) + (n+1)t, (2n+1)(n+1) + nt\right) \\
& = \left( (2n+1)(n+1) -2(n+1)(n-1), (2n+1)(n+1) -2n(n+1)\right) \\
& = \left(3(n+1), (n+1)\right) \\
\end{array}
\end{equation}

Therefore,  the magic sum is  $u = 3(n+1)$ and the value assigned to the central point is  $c = n+1.$

Replacing   $u$ e $c$ from    \eqref{eq17} in  \eqref{eq3} and \eqref{eq4},
 we obtain
 \begin{equation}
    S_{1} = S_{2} = n(n+1).
 \end{equation}

\end{proof}

\begin{teorema} \label{teopm2}
If  $n$ is odd, then there is no magic polygon  $P(n,2)$. 
\end{teorema}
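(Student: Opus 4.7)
The plan is to assume $P(n,2)$ exists for some odd $n \ge 3$ and derive a contradiction with the distinctness of the $2n+1$ labels. Since the derivation of the magic sum and central value in Theorem~\ref{teodeg} (with $k=2$) does not use the parity of $n$, such a hypothetical $P(n,2)$ would have $u = 3(n+1)$ and $c = n+1$; I fix these values throughout.

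For $n$ odd, each of the $n$ axes of symmetry of the regular $n$-gon passes through one vertex and the midpoint of the opposite edge. Labeling $x_i$ for vertex $V_i$ and $y_i$ for the midpoint between $V_i$ and $V_{i+1}$, the axis through $V_i$ meets $M_{i+e}$ with $e=(n-1)/2$, so the center-line equation $x_i + c + y_{i+e} = u$ gives $y_j = 2c - x_{j+d}$ modulo $n$, where $d=(n+1)/2$. Substituting into the edge equation $x_i + y_i + x_{i+1} = 3c$ yields the cyclic recurrence
\[
x_i + x_{i+1} - x_{i+d} = c.
\]

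The key algebraic step is to shift this relation by $d$ and add it to the original. Since $2d = n+1 \equiv 1 \pmod n$, the term $x_{i+2d}$ collapses to $x_{i+1}$ and the sum simplifies to
\[
x_i + x_{i+s} = 2c, \qquad s = d+1 = \tfrac{n+3}{2}.
\]
Iterating this once more gives $x_i = x_{i+2s}$; since $2s = n+3 \equiv 3 \pmod n$, we deduce that $x_i = x_{i+3}$ for every $i$.

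To finish, I would combine the two derived identities. If $\gcd(3,n)=1$ the periodicity forces all $x_i$ to coincide, and then $x_i + x_{i+s} = 2c$ gives $x_i = c$; if $3 \mid n$, writing $n = 3m$ with $m$ odd makes $s = 3(m+1)/2$ a multiple of $3$, so the period-$3$ relation again yields $x_{i+s} = x_i$ and hence $x_i = c$. In both cases the vertex label coincides with the center label, contradicting distinctness. The main technical obstacle I anticipate is setting up the index arithmetic of the pairing correctly so that the identities $2d \equiv 1$ and $2s \equiv 3 \pmod n$ drop out cleanly; once the single recurrence $x_i + x_{i+1} - x_{i+d} = c$ is in hand, the rest is a short calculation plus a trivial case split on $\gcd(3,n)$.
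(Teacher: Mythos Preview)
Your argument is correct and follows essentially the same route as the paper: eliminate one family of variables via the axis equations, obtain a three-term cyclic recurrence, exploit $2d\equiv 1\pmod n$ to collapse it, and deduce period-$3$ behaviour that contradicts distinctness. The paper eliminates the vertex values and works with the midpoint labels $y_i$, reaching $y_{i-1}=y_{i+2}$ and then observing $y_1=y_4$ for $n\ge 5$ (with $n=3$ handled directly); you eliminate the midpoints and work with $x_i$, reaching $x_i=x_{i+3}$ and then pushing on to $x_i=c$. These are dual versions of the same computation.

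One caveat worth fixing: your opening appeal to Theorem~\ref{teodeg} to pin down $u=3(n+1)$ and $c=n+1$ is not quite legitimate, because the step $S_j=\tfrac{n(u-c)}{2}$ in that proof (equation~\eqref{IIIeqg3-1}) tacitly uses that each diameter joins two points of the \emph{same} type (vertex--vertex or midpoint--midpoint), which is exactly what fails when $n$ is odd. Fortunately you do not need those values at all: from the axis relation one gets $y_j=(u-c)-x_{j+d}$, and substituting into the edge equation gives $x_i+x_{i+1}-x_{i+d}=c$ with no hypothesis on $u$; the rest of your calculation only uses $c$ as the (unknown) label of the centre, and the conclusion $x_i=c$ contradicts distinctness regardless. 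You may also shorten the ending: once $x_i=x_{i+3}$ is established, for any odd $n\ge 5$ the equality $x_1=x_4$ is already a contradiction, so the $\gcd(3,n)$ split is really only doing work at $n=3$.
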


\begin{proof} Let  $n$  be odd and suppose that there is magic polygons with  $n$ sides.

If  $x_{1}, \cdots, x_{n}$ are integer numbers corresponding  vertices and  $y_{1}, \cdots, y_{n}, $ are integer numbers corresponding  midpoints of the magic polygon, where $y_{i}$ is the midpoint between the vertices  $x_{i}$ and $x_{i+1},$ we have 

\begin{equation} \label{2eq1}
x_{i} + y_{i} + x_{i+1} = u  
\end{equation}
and
\begin{equation}\label{2eq2}
x_{i} + y_{m+i-1} + c = u 
 \end{equation}
where $m = \frac{n+1}{2}, c$ is the value assigned to the root vertex of the magic polygon and  $x_{a} = x_{b}$ or  $y_{a} = y_{b}$  if  $a \equiv b \bmod n.$ 

By  \eqref{2eq2}, we obtain 
\begin{equation} \label{2eq3}
x_{i} = u - c - y_{m+i-1}
\end{equation}

Substituting  \eqref{eq3} into \eqref{2eq1}, we obtain

\begin{equation} \label{2eq4}
y_{i} = 2c - u + y_{m+i-1} + y_{m+i} 
\end{equation}
and
\begin{equation} \label{2eq5}
y_{i+1} = 2c - u + y_{m+i} + y_{m+i+1}
\end{equation}

By  \eqref{2eq4} and \eqref{2eq5}, we obtain

\begin{equation}\label{2eq6}
\begin{array}{ll}
y_{i+1} - y_{i} & = y_{m+i+1} - y_{m+i-1}  \\
& = y_{m+i+1} - y_{m+i} + y_{m+i} - y_{m+i-1} \\
& = y_{m+(m+i)+1} - y_{m+(m+i)-1} + y_{m+(m+i-1)+1} - y_{m+(m+i-1)-1} \\
& = y_{i+2} - y_{i} + y_{i+1} - y_{i-1}, \\
\end{array}  
\end{equation}

Taking    $n=3 $ and  $i = 1$  in the first equality of  \eqref{2eq6},  we obtain  
$y_{2} - y_{1} = y_{1} - y_{2}, $ which implies  $y_{1} = y_{2}.$ 

Taking  $n \geq 5$ and $i=2$ in  \eqref{2eq6}, we get     
$y_{1} = y_{4}.$

Therefore, we can not have a magic polygons with $n$ sides, for $n$ odd.

\end{proof}
 \begin{figure}[!htb]
\centering
\includegraphics[scale=0.3]{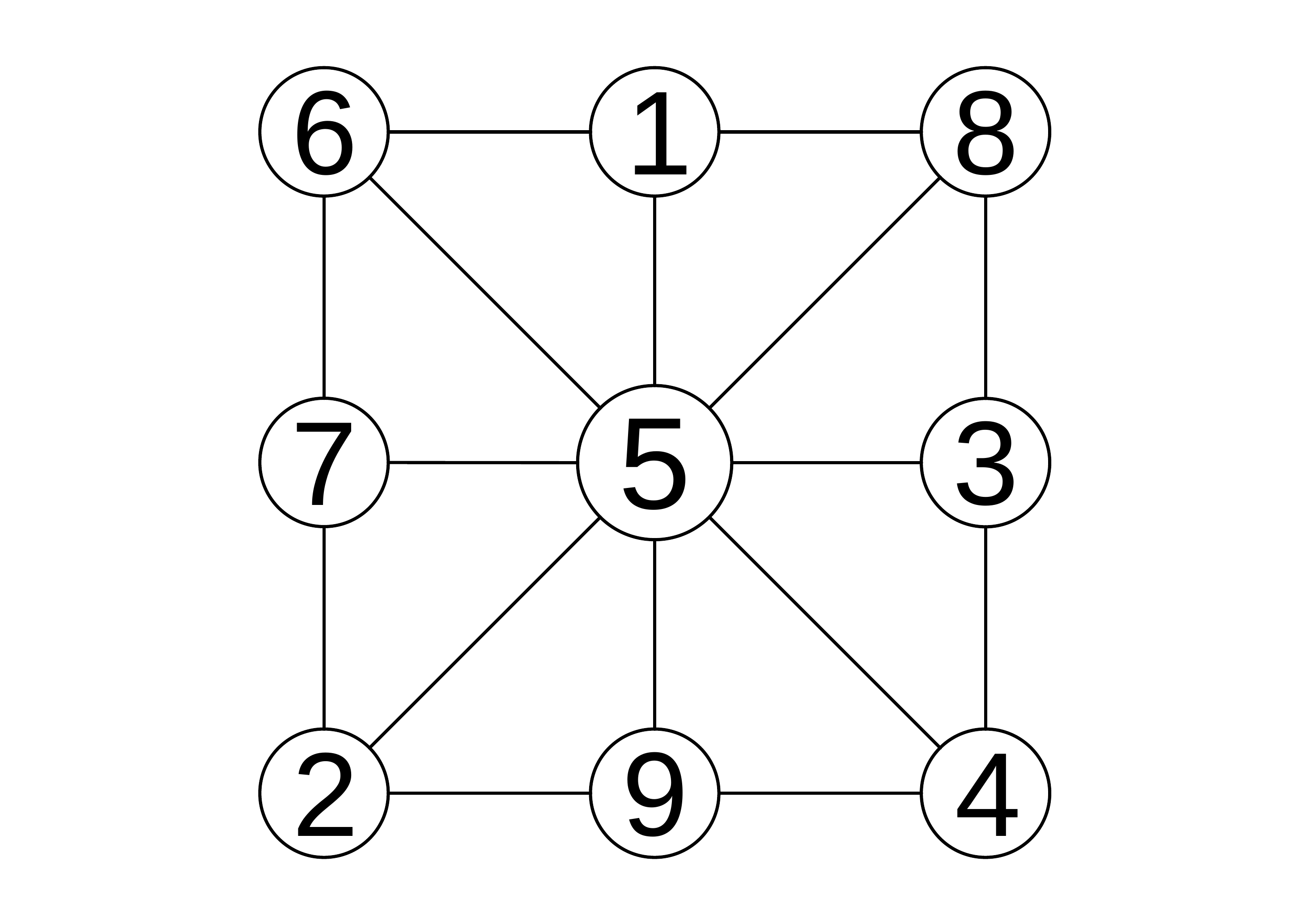}
\caption{Example of Magic Polygon  $P(4,2)$ }
\label{P(4,2)n}
\end{figure}

\begin{teorema} \label{constn2} If $n$ is an even number greater than or equal to 4, then there is a magic polygon $P(n,2)$.
\end{teorema}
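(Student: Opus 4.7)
The plan is to prove existence by explicit construction. By Theorem~\ref{teopm1}, any magic polygon $P(n,2)$ with $n$ even is already forced to carry the value $n+1$ at the centre, to have magic sum $3(n+1)$, and to satisfy $S_{1}=S_{2}=n(n+1)$; moreover the diameter condition forces opposite vertex labels and opposite midpoint labels each to sum to $2(n+1)$. So it suffices to exhibit a labelling of the $2n+1$ points that realises these forced values and whose $n$ edge sums also equal $3(n+1)$.

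My first step would be to set $c=n+1$ at the centre and partition $\{1,\ldots,2n+1\}\setminus\{n+1\}$ into a vertex pool and a midpoint pool, each of size $n$ and consisting of $n/2$ complementary pairs summing to $2(n+1)$. A natural first try places the evens $\{2,4,\ldots,2n\}$ on the vertices and the remaining odds on the midpoints; each set has sum $n(n+1)$, as required. The second step is to specify a cyclic order $x_{1},\ldots,x_{n}$ of the vertex labels with $x_{i}+x_{i+n/2}=2(n+1)$ for every $i$, and then define $y_{i}:=3(n+1)-x_{i}-x_{i+1}$. By construction, the edge identity $x_{i}+y_{i}+x_{i+1}=3(n+1)$ and the vertex-diameter identity $x_{i}+c+x_{i+n/2}=3(n+1)$ hold automatically, and the midpoint-diameter identity $y_{i}+c+y_{i+n/2}=3(n+1)$ follows from the antipodal relation $x_{i+n/2}=2(n+1)-x_{i}$.

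The whole argument therefore reduces to arranging the vertex labels so that the induced multiset $\{y_{i}\}$ is exactly the chosen midpoint pool (i.e.\ $n$ distinct values in the correct set). This is the main obstacle. For $n=4$ the pattern $(x_{1},x_{2},x_{3},x_{4})=(2,4,8,6)$ works at once, and for $n=6$ the cyclic order $(2,10,8,12,4,6)$ succeeds; however, a naive staircase breaks down in general, and parity considerations suggest that the label partition itself may have to be chosen differently depending on $n\bmod 4$. I would therefore split the proof into two cases, $n\equiv 2\pmod 4$ (where the even/odd split can be kept and a zig-zag ordering of the evens produces edge sums covering every even integer in $[n+2,3n+2]\setminus\{2n+2\}$) and $n\equiv 0\pmod 4$ (where a mixed-parity choice of vertex pairs together with an analogous zig-zag is needed to avoid the parity clash). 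In each case the task is to exhibit one explicit permutation of the vertex pool and then verify, by a direct calculation of the $n$ consecutive sums, that the $y_{i}$ are distinct and fill the midpoint pool; all magic-polygon conditions are then already built into the construction, and the theorem follows.
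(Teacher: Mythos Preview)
Your reduction is exactly right and matches the paper: fix $c=n+1$, impose the antipodal condition $x_{i+n/2}=2(n+1)-x_i$, set $y_i=3(n+1)-x_i-x_{i+1}$, and the only remaining task is to order the vertex labels so that the induced $y_i$ are distinct and land in the midpoint pool. Where your plan falls short is that it never produces that ordering in general, and the case split you propose is not the right one. In particular, your claim that for $n\equiv 2\pmod 4$ ``the even/odd split can be kept'' already fails at $n=10$. With all-even vertices the consecutive sums $s_i=x_i+x_{i+1}$ must form a transversal of the pairs $\{s,4(n+1)-s\}$ inside $\{12,14,\ldots,32\}\setminus\{22\}$, and the antipodal relation forces $\sum_{i=1}^{5}s_i=22+2\sum_{i=2}^{5}x_i\equiv 2\pmod 4$; but every element of a pair $\{s,44-s\}$ has the same residue mod $4$, so any transversal sums to $0+2+0+2+0\equiv 0\pmod 4$. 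This parity obstruction kills the all-even-vertex scheme for $n=10$ (and for infinitely many other $n\equiv 2\pmod 4$), so the proposed zig-zag in that case cannot exist. A genuine proof therefore requires a different, uniformly specified vertex labelling.

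The paper sidesteps this by writing down a single explicit mixed-parity formula for the $x_i$ (with two exceptional positions $x_{n/2-1}=n+3$ and $x_{n/2}=1$) valid for every even $n\ge 6$, handling $n=4$ by a picture, and deferring the routine verification to \cite{6}. For $n=8$, for instance, its vertex sequence is $(2,12,11,1,16,6,7,17)$, which is not all-even. If you want to salvage your approach you will need to abandon the clean parity partition and instead exhibit one concrete cyclic ordering (or family of orderings) together with a direct check that the $y_i$ are distinct; without that, the plan remains a heuristic rather than a proof.
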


\begin{proof} Figure \ref{P(4,2)n} shows the existence of Magic Polygons $P(4, 2).$  Let $n$ be even greater than or equal to 6 and consider a regular polygon with $n$ sides whose perimeter is indicated by the sequence of $n$ vertices clockwise $V_{1}, V_{2}, \cdots, V_{n}$. Thus, if for each $j \in \{1, \cdots, n\}, x_{j} $ is the value assigned to vertex  $V_{j}$ and  $y_{j}$ is the value assigned to midpoint of the side whose  ends are the  vertices  $V_{j} $ and $V_{j+1}$ of the magic polygon, then we obtain a magic polygon with  $n$ sides such that, for  $i \in \{ 1, 2, \cdots, \frac{n}{2} - 2\}, $ the values assigned to vertices of the magic polygon satisfy
 
 \[
\left\{ \begin{array}{ll}
x_{i} & =\left\{ \begin{array}{ll}
i + 1, & \text{if  $i$ is odd } \\
n + i + 2, & \text{if  $i$ is even } \\
\end{array} \right.  \\ \\
x_{i+\frac{n}{2}} & = \left\{ \begin{array}{ll}
2(n+1) - i - 1, & \text{ if $i$ is odd} \\
n - i, & \text{ if  $i$ is even} \\
\end{array} \right.  \\
x_{\frac{n}{2}-1} &  = n + 3 \\
x_{\frac{n}{2}} & = 1 \\
\end{array}\right. 
\]
and the values assigned to midpoints of magic polygon satisfy
\[ y_{j} = \left\{\begin{array}{ll} 
2(n+1) - x_{j} - x_{j+1}, & \text{ if  $j \in \{ 1, 2, 3, \cdots, n-1\} $} \\
2(n+1) - x_{1} - x_{n}, & \text{ if $j = n$} \\
\end{array}\right.  \]

The verification that this construction defines  a magic polygon can be seen in \cite{6}. 
\end{proof}

In Figure  \ref{P(4,2)} we have an example of Magic Polygon  $P(4,2)$ using construction in the proof of the Theorem \ref{constn2}.

\begin{figure}[!htb]
\centering
\includegraphics[scale=0.3]{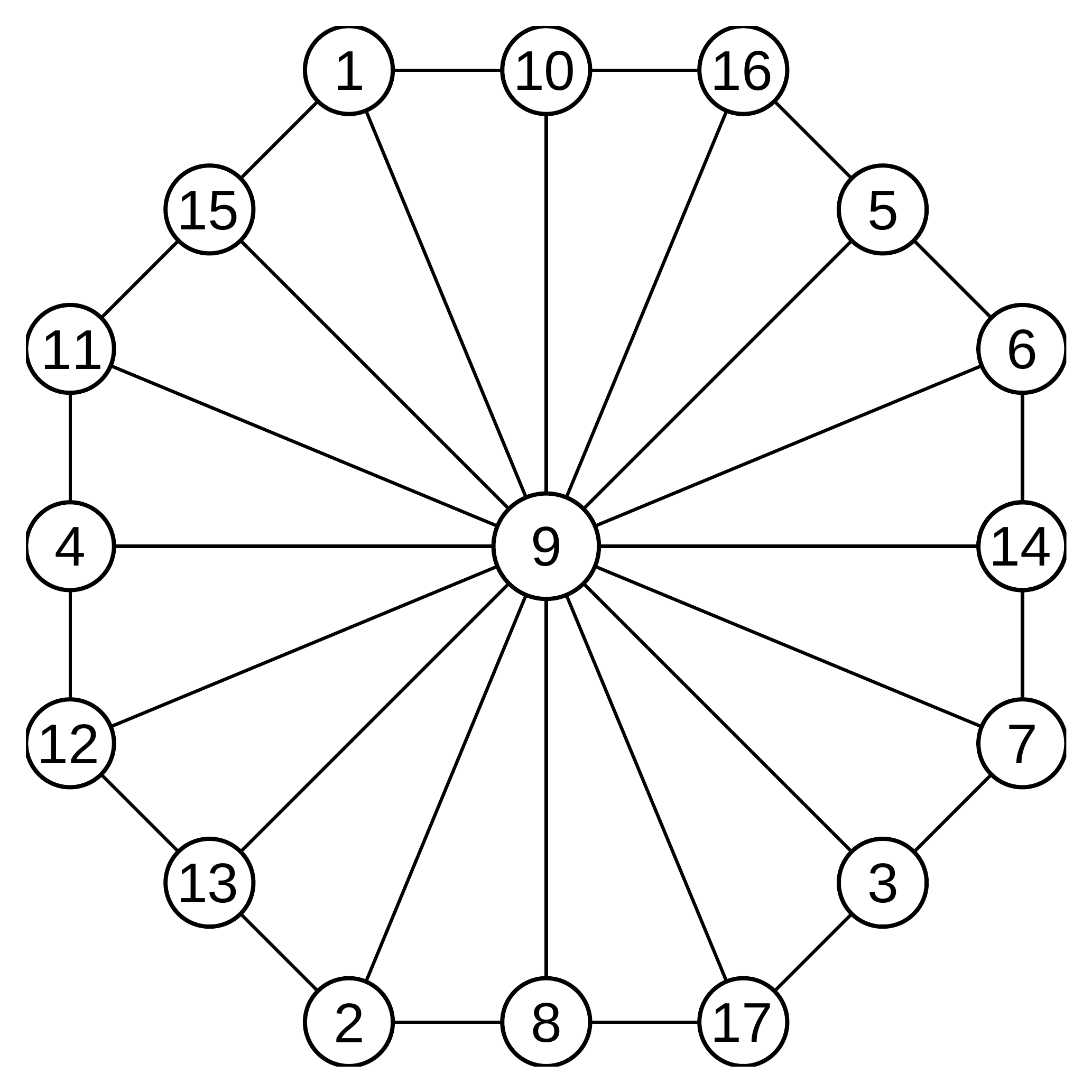}
\caption{Magic Polygon $P(4, 2)$ using construction }
\label{P(4,2)}
\end{figure}

\begin{prop} \label{proppm1}
There are no magic polygons whose values assigned to all vertices have odd parity.
\end{prop}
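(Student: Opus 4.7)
The plan is to argue by contradiction, specializing to the case $P(n,2)$ relevant to this subsection. Suppose such a magic polygon exists and every vertex carries an odd label. By Theorem~\ref{teopm2}, $n$ must be even, and by Theorem~\ref{constn2} we are restricted to $n \ge 4$. Theorem~\ref{teopm1} then yields the magic sum $u = 3(n+1)$ together with the central value $c = n+1$. Because $n$ is even, $n+1$ is odd, so $u$ is odd as well.

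Next I would exploit the edge relations used in the proof of Theorem~\ref{teopm1}. Each side of the polygon contributes an equation of the form $x_i + y_i + x_{i+1} = u$, where $x_i, x_{i+1}$ are the vertex labels at its endpoints and $y_i$ is the single midpoint label. Under the standing hypothesis, $x_i + x_{i+1}$ is the sum of two odd integers, hence even, so $y_i = u - (x_i + x_{i+1})$ inherits the parity of $u$ and must therefore be odd. This forces every one of the $n$ midpoint labels to be odd as well.

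The conclusion then follows by counting. The $n$ vertices and the $n$ midpoints together would account for $2n$ distinct odd integers drawn from $\{1, 2, \ldots, 2n+1\}$, a set that contains only $n+1$ odd elements. Thus $2n \le n+1$ would be needed, which fails for every admissible $n \ge 4$. This contradicts the existence of the configuration.

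The heart of the argument is the parity-propagation step: it requires both that $u$ be odd and that each edge hold exactly one non-vertex point, which is precisely the structure of $P(n,2)$. No subtler obstacle appears, and the closing counting step is immediate once every midpoint has been forced to be odd.
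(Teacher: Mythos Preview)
Your argument is correct. The paper's proof uses the same two ingredients---the oddness of the magic sum $u=3(n+1)$ and the scarcity of odd labels in $\{1,\dots,2n+1\}$---but applies them in the opposite order: since the center $c=n+1$ together with the $n$ odd vertex labels would already exhaust all $n+1$ odd numbers available, the midpoints are forced to be even; then each edge sum $x_i+y_i+x_{i+1}$ would be even, contradicting the oddness of $u$. You instead push parity through the edge equation first (forcing midpoints to be odd) and reach the contradiction via the count. The two routes are mirror images of one another and rest on exactly the same facts.
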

\begin{proof}
If the values assigned to all vertices are odd numbers, then, by definition of Magic Polygon, the values assigned to midpoints are even numbers. This contradicts the fact that the magic sum is an odd number, by Theorems  \ref{teopm1} and \ref{teopm2}.
\end{proof}

\begin{corolario}
There is no Magic Polygon  $P(n,2)$ whose values assigned to all midpoints are even numbers.
\end{corolario}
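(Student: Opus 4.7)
The plan is to deduce this corollary as an immediate parity-counting consequence of Proposition \ref{proppm1}, which we are allowed to use. The key observation is that the label set $\{1,2,\ldots,2n+1\}$ contains exactly $n$ even numbers and exactly $n+1$ odd numbers, while a magic polygon $P(n,2)$ has exactly $n$ midpoints, $n$ vertices, and $1$ central point.

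First I would argue by contradiction: assume there exists a magic polygon $P(n,2)$ in which every midpoint carries an even label. Note that by Theorem \ref{teopm2} we may restrict to $n$ even, so such a polygon could only exist for $n$ even; but this is irrelevant for the counting. Since the $n$ midpoints collectively must use $n$ distinct even labels drawn from the $n$ available even numbers in $\{1,\ldots,2n+1\}$, the midpoints exhaust all even labels. Consequently, every remaining point of the magic polygon, namely the $n$ vertices together with the central point, must carry an odd label.

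In particular, every vertex of the magic polygon has odd parity. This contradicts Proposition \ref{proppm1}, which asserts that no magic polygon can have all vertices labeled by odd numbers. The contradiction rules out the assumed configuration, completing the proof.

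There is essentially no obstacle: the argument is pure pigeonhole on parities of the labels. The only subtlety worth flagging explicitly is that the corollary does not require reproving the odd-sum property of the magic sum — that work is already encapsulated in Proposition \ref{proppm1} via Theorems \ref{teopm1} and \ref{teopm2} — so the proof reduces to a one-line counting observation plus invocation of the proposition.
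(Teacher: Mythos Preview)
Your proof is correct and follows essentially the same route as the paper: both argue that if the $n$ midpoints exhaust the $n$ even labels in $\{1,\ldots,2n+1\}$ then every vertex must carry an odd label, and then invoke Proposition \ref{proppm1} to reach a contradiction. Your version is simply more explicit about the pigeonhole count than the paper's terse ``by definition of Magic Polygon'' phrasing.
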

\begin{proof}
If all values assigned to midpoints of the Magic Polygon are even numbers, then, by definition of Magic Polygon, the values assigned to vertices of the Magic Polygon are odd numbers. This contradicts the  Proposition  \ref{proppm1}.
\end{proof}

\section{ Degenerated Magic Polygons $D(n,k)$}

Let  $\Omega$ be a set of  $k$ regular polygons with distinct sizes and with a common vertex C, called the root vertex.

A degenerated magic polygon  $D(n, k)$ with  $n$ sides and order  $k+1$ is a set of   $k^{2}(n-2) + k + 1$ points that satisfies the following conditions:

\begin{itemize}
\item[(i)]  $k^{2}(n-2) + k + 1$ points of degenerated magic polygon are assigned by distinct numbers from  $1$ to $k^{2}(n-2) + k + 1;$ 
\item[(ii)] One point is the root vertex $C;$
\item[(iii)] $kn-k+1$ points are the vertices of  $k$ regular polygons of $\Omega;$ 
\item[(iv)] each of the edges of the regular polygons not adjacent to the root vertices of  $\Omega$ have  $k-1$ intermediate points of the magic polygon beyond the vertices of the polygons, which gives a total of  $(n-2)(k-1)k $ intermediate points;
\item[(v)] Segments with one end in points at the border of the larger polygon of  $\Omega$ and other end on root vertex have  $k+1$ points of degenerated magic polygon;
\item[(vi)] Segments with ends on adjacent vertices of polygons of  $\Omega $ that do not contain the root vertex have  $k+1$ points of degenerated magic polygon;
\item[(vii)] The sum of values assigned to the $k+1$ points of the degenerated magic polygon in each of the  segments defined in (v) and (vi) is a fixed value   $u, $ called  the  magic sum. 
\end{itemize}

 Figures  \ref{D(5,2)} and \ref{D(5,3)} illustrate  the existence of Degenerated Magic Polygons.

 \begin{figure}[!htb]
\centering
\includegraphics[scale=0.3]{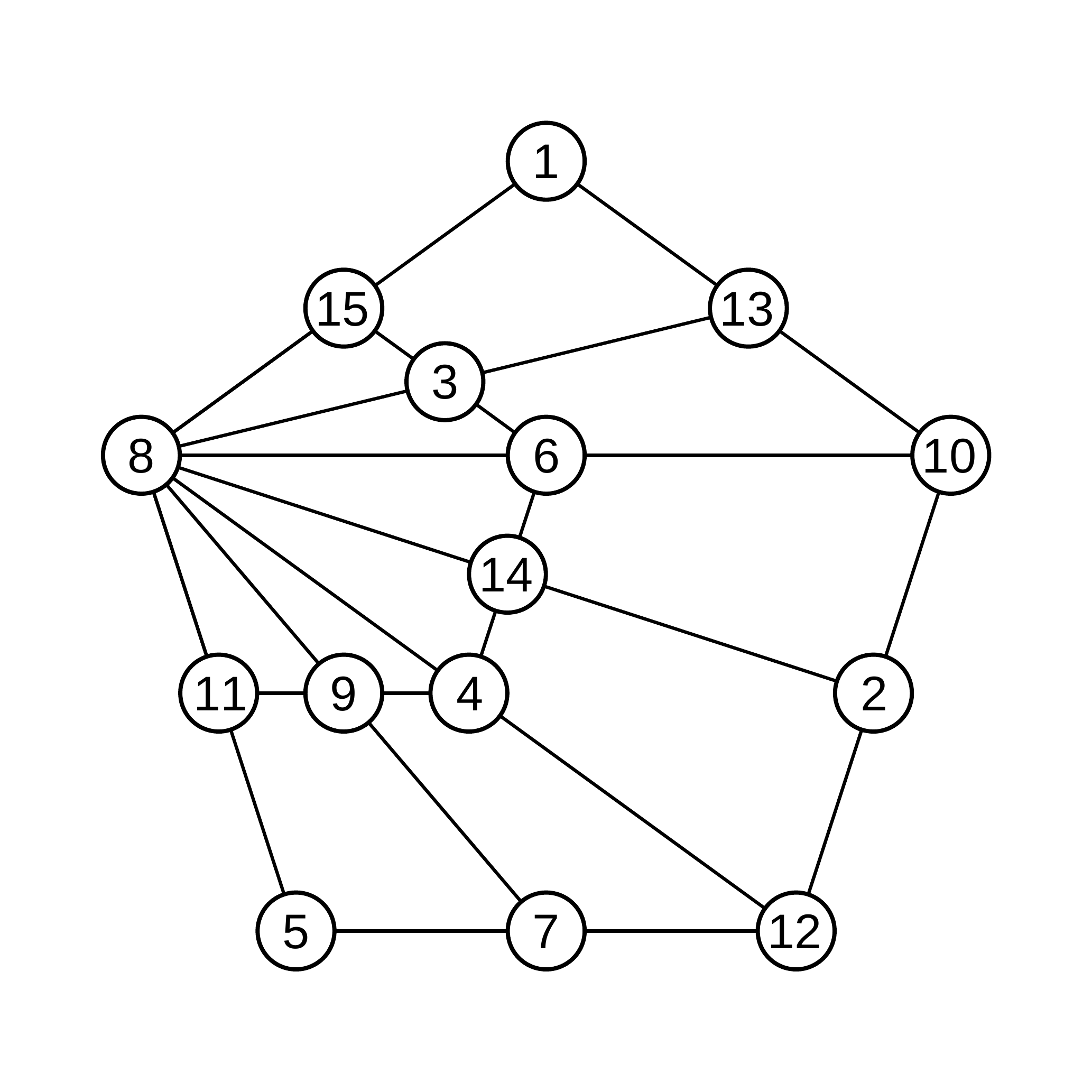}
\caption{Example of Degenerated Magic Polygon  $D(5,2)$ }
\label{D(5,2)}
\end{figure}

\begin{figure}[!htb]
\centering\includegraphics[scale=0.3]{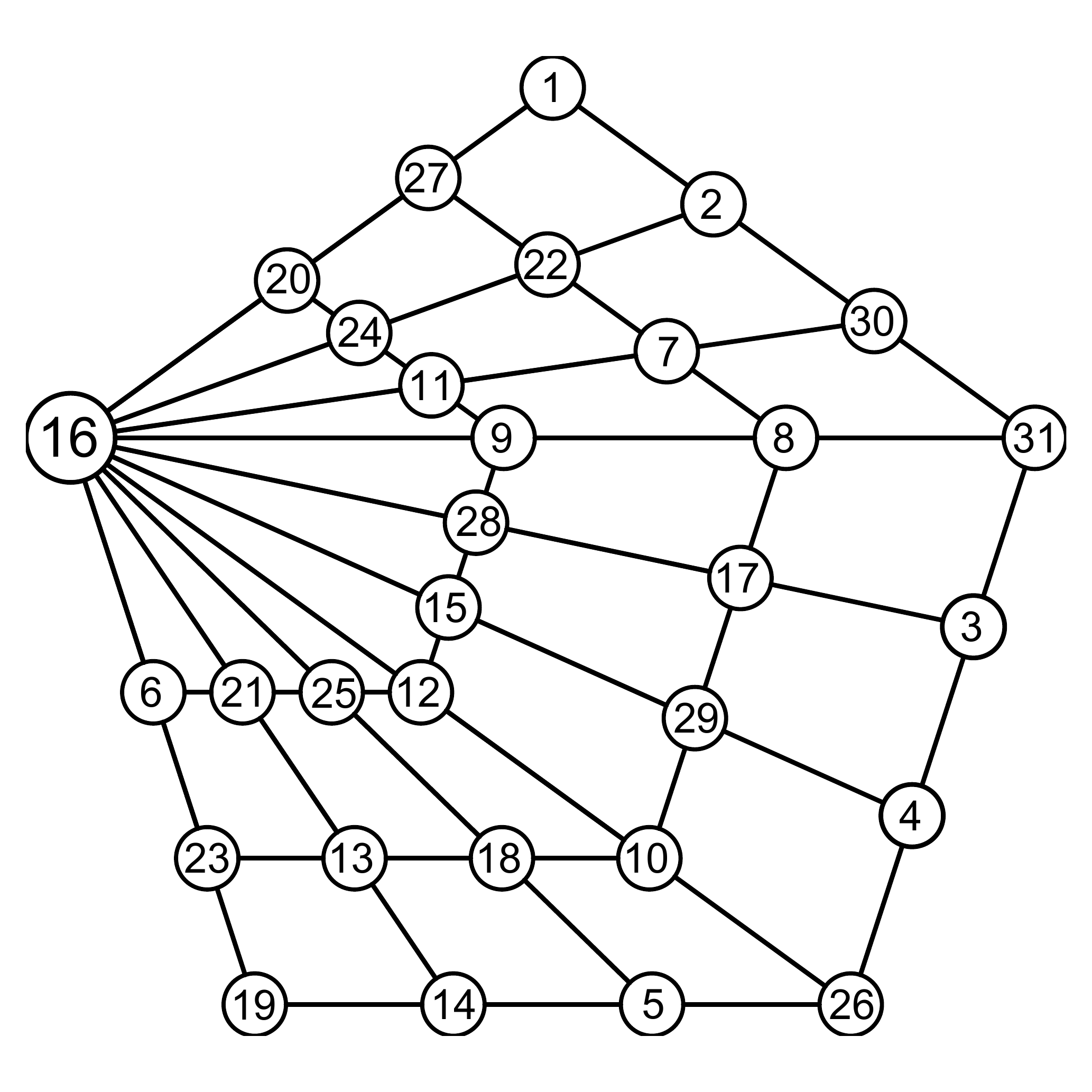}
\caption{Example of Degenerated Magic Polygon  $D(5,3)$}
\label{D(5,3)}
\end{figure}

 \begin{figure}[!htb]
\centering
\includegraphics[scale=0.3]{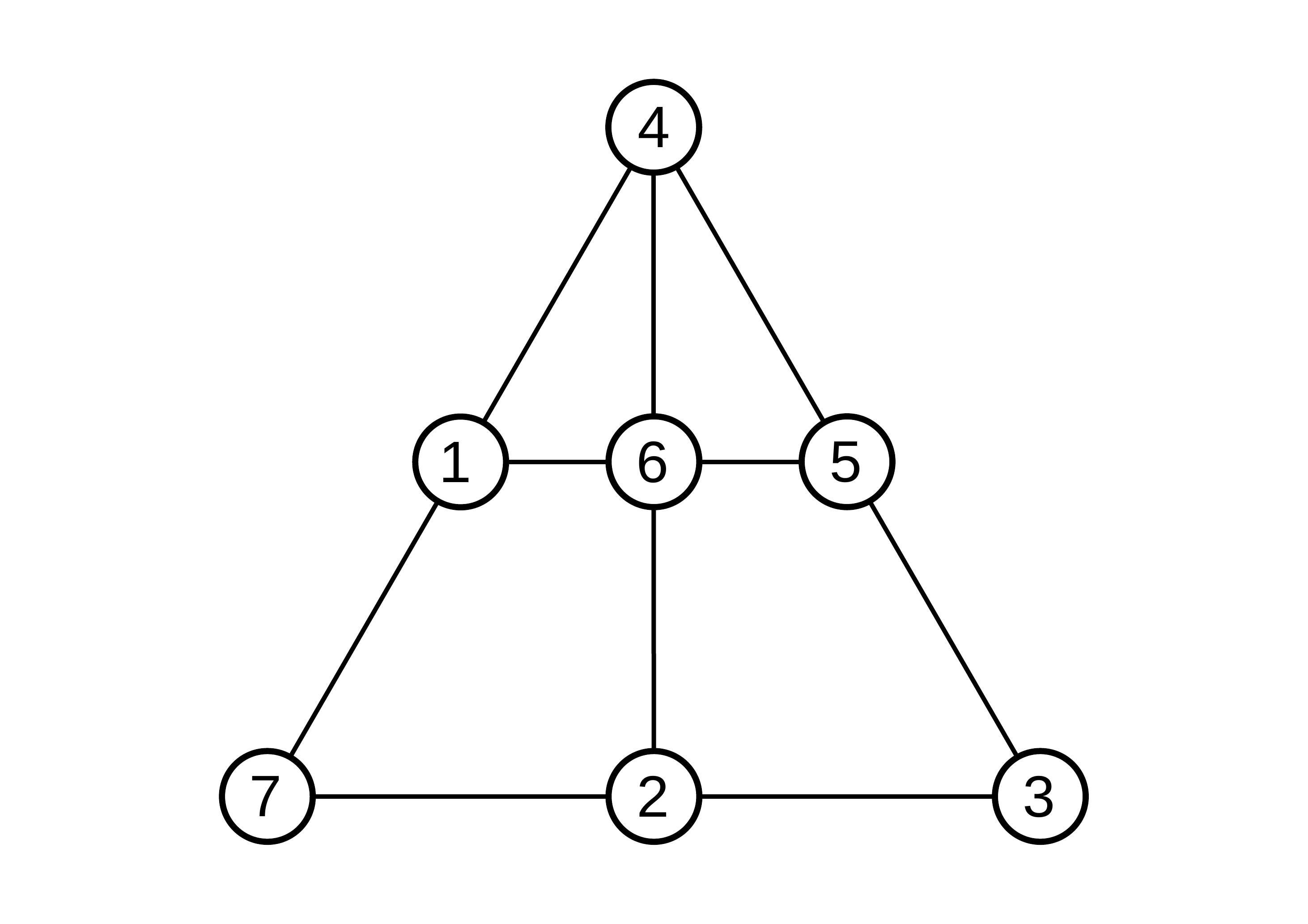}
\caption{Example of Degenerated Magic Polygon $D(3,2)$ }
\label{D(3,2)}
\end{figure}

\begin{teorema} \label{teodeg}
 A degenerated magic polygon  $D(n,k)$ has the following properties:
 \begin{itemize}
 \item[(i)] the magic sum is  $(k+1)\frac{k^{2}(n-2)+k+2}{2};$
 \item[(ii)] the value that corresponds to the root vertex is  $c = \frac{k^{2}(n-2)+k+2}{2};$
\item[(iii)] the sum $S_{j}$ of the values assigns to the  $j$-th points on the edges in the representation of the degenerated magic polygon satisfies
\[ S_{j} = (n-2)k\frac{k^{2}(n-2)+k+2}{2} \]

\end{itemize}
\end{teorema}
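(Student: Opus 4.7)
The plan is to mirror the proof of Theorem 1 in the degenerated setting, treating the shared-root configuration of $D(n,k)$ as a ``broken-cycle'' analog of the closed polygon structure of $P(n,k)$. First, label each non-root point by a triple $(t,i,j)$ with $t \in \{1,\dots,k\}$ naming one of the $k$ polygons, $i \in \{1,\dots,n-2\}$ naming a non-root-adjacent edge of that polygon ordered clockwise from one root-adjacent vertex to the other, and $j \in \{1,\dots,k+1\}$ naming a position along that edge. Let $x_{t,i,j}$ be the value at point $(t,i,j)$, $c$ the root value, and $u$ the magic sum. Conditions (vi) and (v) then supply two systems of linear equations: the $k(n-2)$ edge equations (each non-root-adjacent edge of $k+1$ points summing to $u$) and the root-segment equations (each root-segment summing to $u$, with $c$ contributing once per segment).

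Following the Theorem 1 template, define
\[
S_j \;=\; \sum_{t=1}^{k}\sum_{i=1}^{n-2} x_{t,i,j}, \qquad j \in \{1,\dots,k\}.
\]
Summing the edge equations, using the fact that the non-root-adjacent edges of each polygon meet every interior path-vertex twice and each of the two path-endpoint (i.e., root-adjacent) vertices only once, produces a relation of the shape $2S_1 + S_2 + \cdots + S_k = k(n-2)\,u$ modulo a boundary correction involving the two extremal vertices of each polygon. Summing the root-segment equations produces a second linear relation among $S_1,\dots,S_k$, $c$, and $u$. Subtracting the two, exactly as in the passage between equations \eqref{IIIeqg4} and \eqref{IIIeqg9} of the proof of Theorem 1, causes the boundary correction from the edge side to cancel against the root-adjacent contributions on the root-segment side, leaving the clean relation $u = (k+1)c$.

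Finally, the total-sum identity
\[
S_1 + S_2 + \cdots + S_k + c \;=\; \sum_{i=1}^{k^2(n-2)+k+1} i \;=\; \frac{\bigl(k^2(n-2)+k+1\bigr)\bigl(k^2(n-2)+k+2\bigr)}{2},
\]
combined with $u = (k+1)c$, yields $c = \tfrac{k^2(n-2)+k+2}{2}$ after simplification, from which items (i) and (iii) follow by direct substitution exactly as in the closing lines of the Theorem 1 proof. The main obstacle I anticipate is the endpoint bookkeeping: because the non-root-adjacent edges form a path within each polygon rather than a cycle, the two root-adjacent vertices appear once instead of twice on the (vi)-side, and the offsetting contribution from the (v)-side has to be tracked carefully (in particular, the two root-adjacent edges of the largest polygon are themselves root-segments, while the root-adjacent edges of smaller polygons sit inside longer root-segments passing through them). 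Verifying that these two asymmetries exactly compensate is the only step that deviates nontrivially from the $P(n,k)$ calculation.
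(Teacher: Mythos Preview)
Your plan is the same template the paper uses: define the partial sums $S_j$, add the edge equations, add the root-segment equations, subtract to obtain $u=(k+1)c$, and then use the total-label sum to pin down $c$. So strategically you are on the right track and aligned with the paper.

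There is, however, a concrete bookkeeping error in your total-sum identity. With your indexing $(t,i,j)\in\{1,\dots,k\}\times\{1,\dots,n-2\}\times\{1,\dots,k\}$ the sums $S_1,\dots,S_k$ together account for only $k^2(n-2)$ of the $k^2(n-2)+k$ non-root points: the $k$ ``far'' root-adjacent vertices (the $j=k+1$ endpoint of the last edge $i=n-2$ in each polygon) are not hit by any $S_j$. Hence $S_1+\cdots+S_k+c$ is \emph{not} equal to $\sum_{i=1}^{k^2(n-2)+k+1} i$ as you wrote; it is short by exactly the sum of those $k$ vertex labels, and the derivation of $c$ collapses at that point. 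The paper repairs this in two coupled moves: it lets the inner index of $S_1$ run over $n-1$ values (so $S_1=(n-1)(u-c)$ while $S_j=(n-2)(u-c)$ for $j\ge 2$), thereby absorbing the $k$ stray vertices into $S_1$; and it augments the ``perimeter'' sum by the two root-adjacent edges of the \emph{largest} polygon (which are themselves root-segments through all $k$ layers), producing
\[
2S_1+S_2+\cdots+S_k+2c=[k(n-2)+2]\,u.
\]
With those two adjustments the subtraction against the root-segment total $S_1+\cdots+S_k=(k(n-2)+1)(u-c)$ gives $u=(k+1)c$ cleanly, and your remaining steps go through. This is precisely the ``endpoint bookkeeping'' you flagged as the obstacle; you just need to carry it into the definition of $S_1$ and into the total-sum line rather than leave it as a correction to be cancelled later.
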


\begin{proof}
Let $P_{(t-1)k(n-2)+k(i-1) + j}$ be the   $j$-th  point of the  $i$-th edge of the $t$-th largest polygon that represents the degenerated magic polygon  $D(n,k), $ considering the clockwise direction, let 
\[ x_{(t-1)k(n-2)+k(i-1) + j}  \] be the value assigned to the point  $P_{(t-1)k(n-2)+k(i-1) + j},$   
\[ x_{tk(n-2) + 1} \] 
be the value assigned to the point    $P_{tk(n-2)+1},$
where  $j, t \in \{ 1, 2, \cdots, k\}$ and $  i \in \{ 1, 2, \cdots, (n-2)\}; $  the root vertex is labeled by the number   $c.$ So we have  
\begin{equation}
k^{2}(n-2) + k + 1
\end{equation} 
points in the degenerated magic polygon.

The equations involving  segments that don't  contain the root vertex are 
\begin{equation} \label{IIIeq1}
\left\{ \begin{array}{l}
x_{(t-1)k(n-2)+1} + x_{(t-1)k(n-2) + 2} + \cdots + x_{(t-1)k(n-2) + k + 1} = u\\
x_{(t-1)k(n-2) + k + 1} + x_{(t-1)k(n-2) + k + 2} + \cdots + x_{(t-1)k(n-2) + 2k + 1}= u\\
\vdots \\
x_{(t-1)k(n-2) + (i-1)k+1} + x_{(t-1)k(n-2) + (i-1)k + 2} + \cdots + x_{(t-1)k(n-2) + ik + 1} = u \\
\vdots \\ 
x_{(t-1)k(n-2)+(n-3)k + 1} + x_{(t-1)k(n-2) + (n-3)k + 2} + \cdots + x_{(t-1)k(n-2) + (n-2)k + 1} = u\\
\end{array}\right. \end{equation}

The equations involving segments containing  the root vertex are

\begin{equation}\label{IIIeq2}
\sum_{t=1}^{k} (x_{(t-1)k(n-2) + (i-1)k + j})  + c = u,
\end{equation}
where  
$ i \in \{ 1, 2, \cdots, n-2\} $ e $ j \in \{1, 2, \cdots, k\} $ and
\begin{equation}
\sum_{t=1}^{k} x_{tk(n-2) + 1} + c = u 
\end{equation}

Let 
\begin{equation}\label{IIIeq3}
 S_{1} = \sum_{i=1}^{n-1}\sum_{t=1}^{k} x_{(t-1)k(n-2)+k(i-1)+1} = (n-1)(u-c)
 \end{equation}
 and
\begin{equation} \label{IIIeq4} S_{j} = \sum_{i=1}^{n-2}\sum_{t=1}^{k} x_{(t-1)k(n-2) + k(i-1)+j} = (n-2)(u-c)
\end{equation}
for $j \in \{2, \cdots, k\}. $ 

By adding equations involving  perimeters in the degenerated magic polygon, we obtain 
\begin{equation}\label{IIIeq5}
2S_{1} + S_{2} + \cdots + S_{k} + 2c = [k(n-2)+2]u,
\end{equation}
or, equivalently,
\begin{equation}\label{IIIeq5-1}
2S_{1} + S_{2} + \cdots + S_{k} = [k(n-2)+2]u - 2c
\end{equation}

By adding equations involving the root vertex of the degenerated magic polygon, we obtain
\begin{equation} \label{IIIeq6}
S_{1} + S_{2} + \cdots + S_{k} + (k(n-2)+1)c = (k(n-2)+1)u 
\end{equation}
or, equivalently,
\begin{equation}\label{IIIeq6-1}
S_{1} + S_{2} + \cdots + S_{k} = (k(n-2)+1)(u-c)
\end{equation}

By subtracting \eqref{IIIeq6-1} from  \eqref{IIIeq5-1}, we obtain 

\begin{equation}\label{IIIeq7}
S_{1} = u + (k(n-2)-1)c
\end{equation}

By \eqref{IIIeq3} and \eqref{IIIeq7},  
\begin{equation} \label{IIIeq8}
(n-1)(u-c) = u + (k(n-2)-1)c.
\end{equation}

Hence
\begin{equation} \label{IIIeq9}
u = (k+1)c
\end{equation}

As the values assigned to the points of the degenerated magic polygon are distinct values of the set  $\{1, 2, \cdots, k^{2}(n-2)+k+1\},$ we obtain

\begin{equation}\label{IIIeq10}
S_{1} + S_{2} + \cdots + S_{k} + c = \sum_{i=1}^{k^{2}(n-2)+k+1} i = \frac{(k^{2}(n-2)+k+1)(k^{2}(n-2)+k+2)}{2}
\end{equation}

By \eqref{IIIeq6-1} and \eqref{IIIeq10},
\begin{equation}\label{IIIeq11}
[k(n-2)+1](u-c) + c = \frac{(k^{2}(n-2)+k+1)(k^{2}(n-2)+k+2)}{2}
\end{equation}

By \eqref{IIIeq9} and \eqref{IIIeq11}, 
\begin{equation}\label{IIIeq12}
[k^{2}(n-2)+k+1]c = \frac{(k^{2}(n-2)+k+1)(k^{2}(n-2)+k+2)}{2},
\end{equation}
that implies 
\begin{equation} \label{IIIeq13}
c = \frac{k^{2}(n-2)+k+2}{2}
\end{equation}

\end{proof}

\begin{corolario} If $k$ and $n$ are positive integer numbers, such that 
$k$ is odd and  $n$ is even, then there is no degenerated magic polygon  $D(n,k).$
\end{corolario}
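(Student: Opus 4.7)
The plan is to invoke Theorem \ref{teodeg}(ii) directly and derive a parity contradiction from the formula for the root vertex. By that theorem, in any degenerated magic polygon $D(n,k)$ the value assigned to the root vertex must equal
\[
c = \frac{k^{2}(n-2)+k+2}{2}.
\]
Since every point of $D(n,k)$ is labeled by a distinct integer from $1$ to $k^{2}(n-2)+k+1$, the number $c$ must in particular be an integer. So it suffices to show that under the stated parity hypotheses the numerator $k^{2}(n-2)+k+2$ is odd, which would preclude $c$ from being an integer.

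The key step is the parity analysis. Assume $k$ is odd and $n$ is even. Then $k^{2}$ is odd while $n-2$ is even, so the product $k^{2}(n-2)$ is even. Adding $k$ (odd) and $2$ (even) to an even number gives an odd number, so $k^{2}(n-2)+k+2$ is odd. Hence $\dfrac{k^{2}(n-2)+k+2}{2}$ is not an integer, contradicting the requirement that $c$ is a label chosen from $\{1,2,\ldots,k^{2}(n-2)+k+1\}$. Therefore no such $D(n,k)$ can exist.

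There is essentially no obstacle here: the corollary is a one-line consequence of the closed-form expression for $c$ obtained in Theorem \ref{teodeg}, combined with condition (i) of the definition of a degenerated magic polygon (which forces $c\in\mathbb{Z}$). The only thing to be careful about is to state explicitly the integrality requirement on $c$, since the theorem itself only computes $c$ as a rational number.
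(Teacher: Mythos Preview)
Your argument is correct and matches the paper's own proof essentially line for line: both invoke Theorem~\ref{teodeg}(ii) to get $c=\frac{k^{2}(n-2)+k+2}{2}$ and observe that for $k$ odd and $n$ even this fails to be an integer. Your explicit parity computation is a slight elaboration of what the paper leaves implicit, but there is no substantive difference.
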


\begin{proof} By Theorem \ref{teodeg},  
\[ c = \frac{k^{2}(n-2) + k + 2}{2}. \]
 Therefore,  if  $k$ is odd and  $n$ is even, then $c$ is not a integer number. Hence, there is no degenerated magic polygon of order $k+1$ with $n$ vertex for $k$ odd and $n$ even.
  \newline
\end{proof}

\begin{teorema}\label{constpmd2} If $n$ is an integer number greater than or equal to 3,  then there is a degenerated magic polygon $D(n,2)$.
\end{teorema}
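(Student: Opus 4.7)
The plan is to construct an explicit $D(n,2)$ for every $n\ge 3$. By Theorem~\ref{teodeg}, any such object must have root labelled $c=2n-2$ and magic sum $u=3c=6n-6$, with the remaining labels a permutation of $\{1,\ldots,4n-5\}\setminus\{c\}$.

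The first step is a symmetry reduction. Each through-root equation reads $\ell(P)+\ell(P')+c=3c$, so two boundary points lying on the same ray from $C$ (one per polygon) must carry complementary labels summing to $2c=4n-4$. This partitions $\{1,\ldots,4n-5\}\setminus\{c\}$ into the $2n-3$ complementary pairs $\{i,\,4n-4-i\}$, and once polygon~$1$ is labelled the labelling of polygon~$2$ is forced. Moreover, the three labels on any non-root edge of polygon~$2$ are precisely the complements of the three on the corresponding edge of polygon~$1$; since $3\cdot 2c-3c=3c$, those edge sums automatically hold on polygon~$2$ as soon as they hold on polygon~$1$. The problem thus reduces to labelling polygon~$1$ alone.

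Writing $a_i=\ell_i-c$ for the boundary labels of polygon~$1$ taken in order along the boundary (vertex, midpoint, vertex, $\ldots$, vertex), the reduced task is to find $(a_1,\ldots,a_{2n-3})$ such that $(|a_i|)$ is a permutation of $\{1,\ldots,2n-3\}$ and $a_{2i-1}+a_{2i}+a_{2i+1}=0$ for $i=1,\ldots,n-2$. Only the $n-1$ vertex entries $v_j=a_{2j-1}$ are free; the midpoints $m_j=-(v_j+v_{j+1})$ are then forced. I would prescribe the vertices by a zig-zag that alternates small positive and large negative magnitudes, e.g.\ $v_{2j-1}=-(2n-1-2j)$ and $v_{2j}=j$. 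This scheme already yields valid labellings in the base cases $n\in\{3,4,5\}$ (consistent with Figures~\ref{D(3,2)} and~\ref{D(5,2)}) and produces midpoints whose absolute values fall in a clean arithmetic progression. For larger $n$ the vertex- and midpoint-magnitudes can collide, and the main obstacle is to repair this by a bounded number of sign flips and vertex swaps arranged by the residue class of $n$ modulo a fixed integer. Once the sign pattern is fixed the triple-sum identities hold tautologically, and verification reduces to checking distinctness and the range bound $|a_i|\le 2n-3$ on the explicit formulas.
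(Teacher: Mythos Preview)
Your symmetry reduction is exactly right and is the same one the paper exploits: the complementarity $\ell(P)+\ell(P')=2c$ forces the inner polygon from the outer one, and the edge equations on polygon~$2$ follow from those on polygon~$1$. So reducing to a centred sequence $(a_1,\ldots,a_{2n-3})$ with $|a_i|$ a permutation of $\{1,\ldots,2n-3\}$ and $a_{2i-1}+a_{2i}+a_{2i+1}=0$ is the right move.

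The gap is in your zig-zag. With $v_{2j-1}=-(2n-1-2j)$ and $v_{2j}=j$ the scheme already fails at $n=6$: you get $v_5=-5$ and $m_3=-(v_3+v_4)=-(-7+2)=5$, so $|v_5|=|m_3|=5$. The ``repair by sign flips and swaps depending on $n\bmod m$'' is where the actual content would have to live, and as stated it is a hope rather than an argument.

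In fact no repair by cases is needed. The paper's construction is your zig-zag with one character changed: take $v_{2j}=2j-1$ instead of $v_{2j}=j$ (and keep $v_{2j-1}=-(2n-1-2j)$). Then every vertex value is odd, and the forced midpoints become $m_j=-(v_j+v_{j+1})=2(n-1-j)$, all even. Hence $\{|v_j|\}=\{1,3,\ldots,2n-3\}$ and $\{|m_j|\}=\{2,4,\ldots,2n-4\}$ with no overlap, for every $n\ge 3$, and the triple sums vanish by construction. Undoing the centring and the complementarity gives exactly the closed formulas the paper writes down and verifies. So your outline is one small adjustment away from a uniform proof; the parity separation (odd at vertices, even at midpoints) is the missing idea that eliminates all collisions at once.
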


\begin{proof} Let  $C$ be the root vertex of a degenerated magic polygon  $D(n, 2),$  $(C, V_{1}, \cdots, V_{n-1}) $ be the sequence of vertices of the largest polygon and  $(C, V_{1}^{\ast}, \cdots, V_{n-1}^{\ast})$ be the sequence of vertices of smallest polygon, both on clockwise direction.

For each  $j \in \{1, 2, \cdots, n-2\}, $ we consider   $M_{j} $  the point of degenerated magic polygon between  $V_{j} $ and  $V_{j+1}$ and  $M_{j}^{\ast}$  the point of the degenerated magic polygon between  $V_{j}^{\ast}$ and $V_{j+1}^{\ast}. $ 

Thus, if   $c$ is the value assigned to the root vertex $C$ and for each $j\in \{ 1, 2, \cdots, n-2\}, z_{j}$ is the value assigned to the vertex  $V_{j}, z_{j}^{\ast}$ is the value assigned to the vertex $V_{j}^{\ast}, m_{j}$ is the value assigned to the point $M_{j}$ and $m_{j}^{\ast}$ is the value assigned to the point  $M_{j}^{\ast}, $  then, the following conditions are satisfied:

\begin{equation}\label{pd1}
z_{j} + m_{j} + z_{j+1} = u
\end{equation} 

\begin{equation}\label{pd2}
z_{j}^{\ast} + m_{j}^{\ast} + z_{j+1}^{\ast} = u
\end{equation}

and
\begin{equation}\label{pd2}
z_{j} + z_{j}^{\ast} + c = u
\end{equation} 

Setting, for each $j \in \{1, 2, \cdots, n-1\}, $ 
 
 \[
\left\{ \begin{array}{ll}
z_{j} & =\left\{ \begin{array}{ll}
j , & \text{if  $j$ is odd } \\
2n + j -3, & \text{if  $j$ is even } \\
\end{array} \right.  \\ \\
z_{j}^{\ast}  & = \left\{ \begin{array}{ll}
4(n-1) - j, & \text{ if $j$ is odd} \\
2n - j-1, & \text{ if  $j$ is even} \\
\end{array} \right.  \\
m_{j} & = 4(n-1)-2j \\
m_{j}^{\ast} & = 2j \\
\end{array}\right. 
\]
then, we obtain a degenerated magic polygon $P(n,2).$

In fact, the condictions on segments are satisfied, because
\[ z_{j}+m_{j} + z_{j+1} = \left\{ \begin{array}{cl}
j + 2n+j+1-3 + 4(n-1) - 2j = 6(n-1), & \text{ if $j $ is odd}  \\
2n + j - 3 + 4(n-1) - 2j + j + 1 = 6(n-1), & \text{ if $j$ is even}\\
  \end{array}\right. \]
  
 \[ z_{j}^{\ast} + m_{j}^{\ast} + z_{j+1}^{\ast} = \left\{ \begin{array}{cl}
4(n-1) - j +2j + 2n - j -1 - 1 =  6(n-1), & \text{ if $j $ is odd}  \\
2n - j - 1 + 2j + 4(n-1) - j - 1 =  6(n-1), & \text{ if $j$ is even}\\
  \end{array}\right. \]  
  
  \[ z_{j} + z_{j}^{\ast} + c = \left\{ \begin{array}{cl}
  j + (4(n-1) - j) + 2(n-1) = 6(n-1), & \text{ if $j$ is odd} \\
 (2n + j - 3) + (2n - j - 1) + 2(n-1) = 6(n-1) & \text{ if $j$ is even} \\
\end{array} \right. 
  \]
  
  \[ m_{j} + m_{j}^{\ast} + c = 
  4(n-1) - 2j + 2j + 2(n-1) = 6(n-1)\]
  
  In addition, all values assigned to the points are different because
  
  \[ \begin{array}{cl} A_{1} &  = \{ z_{j} \mid \text{ $j$ is odd and } j \in \{1, 2, \cdots, n - 1\} \} \\
  & = \left\{ \begin{array}{cl}
  \{ 1, 3, 5, \cdots, n - 2\}, & \text{ if $n$ is odd} \\
  \{ 1, 3, 5, \cdots, n - 1\}, & \text{ if $n$ is even} \\
  \end{array} \right. \\
  \end{array} \]
  \[\begin{array}{cl} B_{1} & = \{ z_{j} \mid \text{ $j$ is even and } j \in \{1, 2, \cdots, n - 1\} \} \\
  & = \left\{ \begin{array}{cl}
  \{ 2n-1, 2n+1, 2n+3, \cdots, 3n-4\}, & \text{ if $n$ is odd} \\
  \{ 2n-1, 2n+1,  2n+3, \cdots, 3n - 5\}, & \text{ if $n$ is even} \\
  \end{array} \right. \\
  \end{array} \]
  \[\begin{array}{cl}  A_{2} & = \{ z_{j}^{\ast}  \mid \text{ $j$ is odd and } j \in \{1, 2, \cdots, n - 1\} \} 
  \\
  & = \left\{ \begin{array}{cl}
  \{ 4n-5, 4n - 7, 4n - 9, \cdots, 3n-2\}, & \text{ if $n$ is odd} \\
  \{ 4n-5, 4n -7, 4n-9, \cdots, 3n - 3\}, & \text{ if $n$ is even} \\
  \end{array} \right. \\
  \end{array} \]
  \[ \begin{array}{cl}  B_{2} & = \{ z_{j}^{\ast}  \mid \text{ $j$ is even and } j \in \{1, 2, \cdots, n - 1\} \} \\
  & = \left\{ \begin{array}{cl}
  \{ 2n-3, 2n-5, 2n-7, \cdots, n\}, & \text{ if $n$ is odd} \\
  \{ 2n-3, 2n-5, 2n-7, \cdots,  n+1\}, & \text{ if $n$ is even} \\
  \end{array} \right. \\
  \end{array}  \]
  \[ \begin{array}{cl} 
  C_{1} & = \{ m_{j}  \mid  j \in \{1, 2, \cdots, n - 2\} \} \\
  & = 
  \{ 4n-6, 4n-8, 4n-10, \cdots, 2n \} \\
  \end{array} \]
    \[\begin{array}{cl}  C_{2} & = \{ m_{j}^{\ast}   \mid  j \in \{1, 2, \cdots, n - 2\} \} \\
    & = 
  \{ 2, 4, 6, 8, 10, 12, \cdots, 2n-4\} \\
  \end{array} \]
   \[ D=\{ c \} = \{ 2(n-1)\} \]
   satisfy
   \[ |A_{1}| = |A_{2}| = \left\{ \begin{array}{cl}
 \frac{n-1}{2}, & \text{ if $n$ is odd} \\
 \frac{n}{2}, & \text{ if $n$ is even} \\
 \end{array} \right. \]
 \[     |B_{1}| = |B_{2}| = \left\{ \begin{array}{cl}
 \frac{n-1}{2}, & \text{ if $n$ is odd} \\
 \frac{n-2}{2}, & \text{ if $n$ is even} \\
 \end{array} \right. \]
   \[ |C_{1}| = |C_{2}| = n - 2, \]
   \[ |A_{1}| + |A_{2}| + |B_{1}| + |B_{2}| + |C_{1}| + |C_{2}| + |D| = 4n -5 \]
   
   \[ \begin{array}{ll}  
   & A_{1} \cup A_{2} \cup B_{1} \cup B_{2} \cup C_{1} \cup C_{2} \cup D \\
    = & \{j \in \mathbb{Z} \mid 1 \leq j \leq 4n - 5\} \\
    = & \{ 1, 2, \cdots,  4n - 5\}\\
    \end{array} 
    \]   
    and 
   \[  |A_{1}\cup A_{2} \cup B_{1} \cup B_{2}\cup C_{1} \cup C_{2} \cup D| = 4n-5 \] 
\end{proof}

In Figure \ref{D(5,2)} we have an example of Magic Polygon $ D(5, 2) $ constructed in the proof of the Theorem \ref{constpmd2}. An example of Magic Polygon $ D(5, 2) $ that can not be obtained by this construction can be seen in the figure \ref{D(5,2)c}.

 \begin{figure}[!htb]
\centering
\includegraphics[scale=0.3]{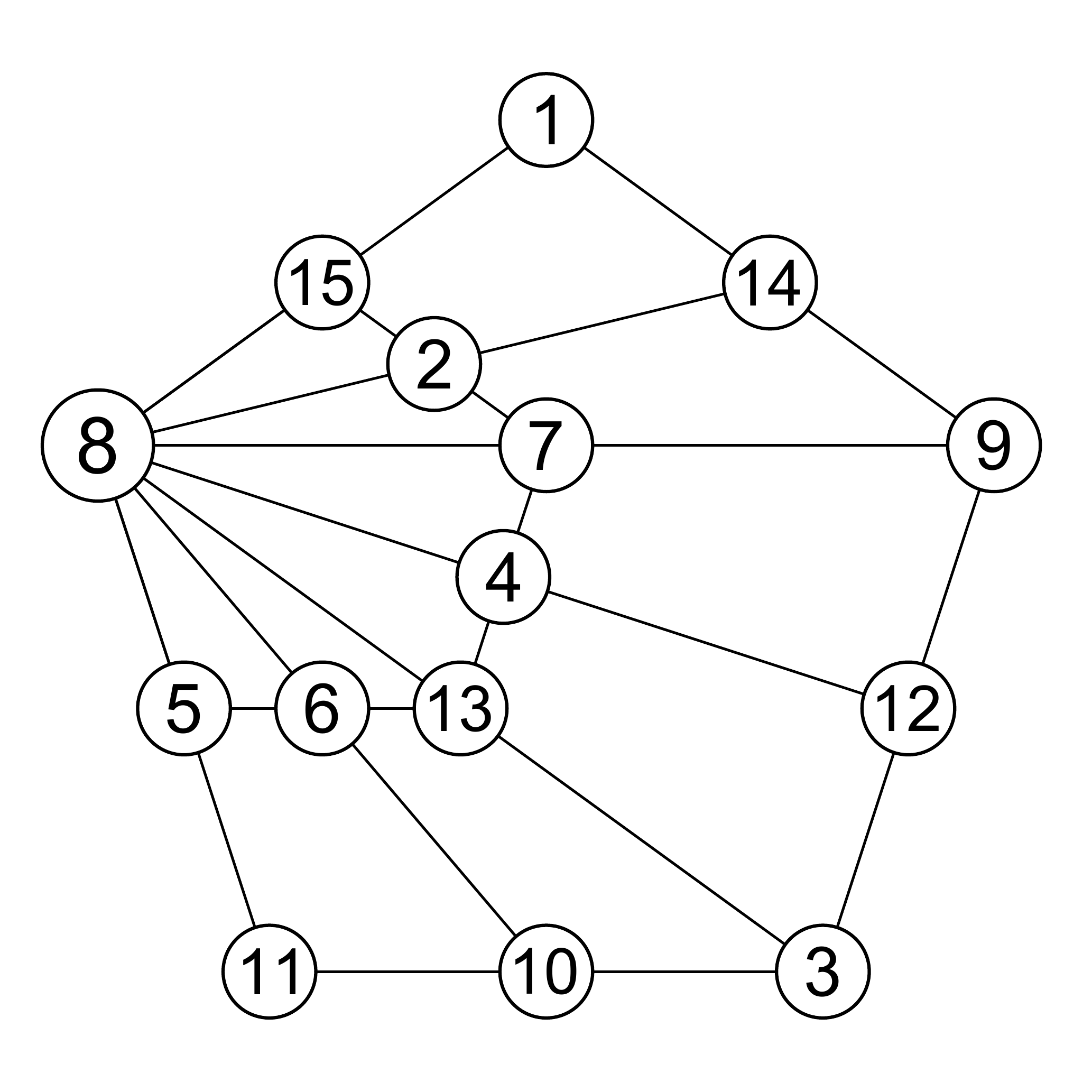}
\caption{Example of Degenerated Magic Polygon  $D(5,2)$ using construction }
\label{D(5,2)c}
\end{figure}


\begin{thebibliography}{sssd}


\bibitem[1]{1} W.R. Andress, Basic properties of pandiagonal magic squares, Amer. Math. Monthly 67 (1960) 143–152.
\bibitem[2]{2} S. Cammann, The evolution of magic squares in China, J. Am. Oriental Soc. 80 (1960) 116–124.
\bibitem[3]{3} C.Y.J. Chan et al., A construction of regular magic squares of odd
order, Linear Algebra and its Applications 457 (2014) 293–302

\bibitem[4]{4} K.L. Chu, S.W. Drury, G.P.H. Styan, G. Trenkler, Magic Moore–Penrose inverses and philatelic magic square with special emphasis
on the Daniels–Zlobec magic square, Croatian Oper. Res. Rev. 2 (2011) 4–13.

\bibitem[5]{5} G. Ganapathy, K. Mani, Add-on security model for public-key cryptosystem based on magic square implementation, in: Proc.
World Congress on Engineering and Computer Science 1 WCECS, 2009.
\bibitem[6]{6} V. Jakicic,  R.  Bouchat, Magic Polygons and their properties, http://www.arxiv.org/abs/1801.02262v1, 2018.

\bibitem[7]{7} Y. Kim, J. Yoo, An algorithm for constructing magic squares, Discrete Applied Mathematics 156 (2008) 2804–2809.

\bibitem[8]{8} P.D. Loly, Franklin squares: a chapter in the scientific studies of magical squares, Complex Systems 17 (2007) 143–161.
.
\bibitem[9]{9} R.B. Mattingly, Even order regular magic squares are singular, Amer. Math. Monthly 107 (2000) 777–782.
\bibitem[10]{10} R.P. Nordgren, New constructions for special magic squares, Int. J. Pure Appl. Math., in press.
\bibitem[11]{11} K. Ollerenshaw, D.S. Brée, Most-Perfect Pandiagonal Magic Squares: Their Construction and Enumeration, The Institute of Mathematics and its Applications, Southend-on-Sea, UK, 1998.

\bibitem[12]{12} C.A. Pickover, The Zen of Magic Squares, Circles, and Stars, second printing and first paperback printing, Princeton University
Press, Princeton, NJ, 2003 (original printing and e-book: 2002).
\bibitem[13]{13} C. Planck, Pandiagonal magic squares of orders 6 and 10 without minimal numbers, Monist 29 (1919) 307–316.
\bibitem[14]{14} B. Rosser, R.J. Walker, The algebraic theory of diabolic magic squares, Duke Math. J. 5 (1939) 705–728.
 

\end{thebibliography}
\end{document}